\documentclass[a4paper,11pt,reqno]{amsart}
\usepackage[a4paper,hscale=0.65,vscale=0.65,centering]{geometry}

\newcommand{\erre}{\mathbb{R}}

\newcommand{\E}{\mathbb{E}}
\newcommand{\EE}{\mathcal{E}}
\renewcommand{\L}{\mathcal{L}}

\renewcommand{\P}{\mathbb{P}}
\renewcommand{\r}{\mathbb{R}}

\newcommand{\ip}[2]{\langle #1,#2 \rangle}
\newcommand{\bip}[2]{\left\langle #1,#2 \right\rangle}
\newcommand{\ds}{\displaystyle}

\newcommand{\sgn}{\mathop{\mathrm{sgn}}\nolimits}

\newcommand{\var}{\mathop{\mathrm{Var}}\nolimits}

\newtheorem{prop}{Proposition}
\newtheorem{thm}[prop]{Theorem}
\newtheorem{coroll}[prop]{Corollary}
\newtheorem{lemma}[prop]{Lemma}

\theoremstyle{definition}
\newtheorem{rmk}[prop]{Remark}

\newtheorem*{ex}{Example}
\newtheorem*{exc}{Example (contd.)}

\begin{document}

\title[Feynman-Kac propagators]{$L^p$ estimates for
  Feynman-Kac propagators with time-dependent reference
  measures}

\author{Andreas Eberle}
\address{Institut f\"ur Angewandte Mathematik, Universit\"at Bonn,
  Endenicher Allee 60, D-53115 Bonn, Germany}
\email{eberle@uni-bonn.de}
\urladdr{http://wiener.iam.uni-bonn.de/$\sim$eberle}

\author{Carlo Marinelli}
\address{Institut f\"ur Angewandte
  Mathematik, Universit\"at Bonn, Endenicher Allee 60, D-53115 Bonn, Germany}
\urladdr{http://www.uni-bonn.de/$\sim$cm788}

\date{2 September 2009}

\begin{abstract}
  We introduce a class of time-inhomogeneous transition operators of
  Feynman-Kac type that can be considered as a generalization of
  symmetric Markov semigroups to the case of a time-dependent
  reference measure. Applying weighted Poincar\'{e} and logarithmic
  Sobolev inequalities, we derive $L^p \to L^p$ and $L^p \to L^q$
  estimates for the transition operators. Since the operators are
  not Markovian, the estimates depend crucially on the value of $p$.
  Our studies are motivated by applications to sequential Markov Chain
  Monte Carlo methods.
\end{abstract}

\subjclass[2000]{65C05, 60J25, 60B10, 47H20, 47D08}

\keywords{Time-inhomogeneous Markov processes, Feynman-Kac formula,
Dirichlet forms, Poincar\'{e} inequalities, logarithmic Sobolev
inequalities, Markov semigroups, $L^p$ estimates, Markov Chain Monte
Carlo, sequential Monte Carlo, importance sampling.}

\thanks{We thank the referee for careful reading and very valuable
  suggestions on the first version of this paper. The work for this
  paper was carried out while the second-named author was visiting the
  Department of Statistics of Purdue University supported by a MOIF
  fellowship. Both authors were partially supported by the
  Sonderforschungsbereich 611, Bonn.}

\maketitle

\section{Introduction}
The purpose of this work is to derive $L^p \to L^p$ and $L^p \to L^q$
bounds for a class of non-Markovian time inhomogeneous transition
operators $q_{s,t}$. These Feynman-Kac type transition operators play
a r\^{o}le in the analysis of sequential MCMC methods, see \cite{DMDJ,
  EM08a}. In the time-homogeneous case, the class of operators
considered here is precisely that of transition functions of symmetric
Markov processes.

In general, let
\begin{equation*}
 \mu_t(x)\ =\ \frac 1{Z_t}\,\exp \left(-\mathcal{H}_t(x)\,\right)\;\mu_0 (x)\,
 ,\qquad t\geq 0 ,
\end{equation*}
denote a family of mutually absolutely continuous probability measures
on a finite set $S$. Here $Z_t$ is a normalization constant, and
$(t,x)\mapsto\mathcal{H}_t(x)$ is a given function on $[0,\infty)
\times S$ that is continuously differentiable in the first variable.
For instance, if $\mathcal{H}_t(x)=t\,\mathcal{H}(x)$ for some
function $\mathcal{H}:S\to \mathbb{R}$, then $(\mu_t)_{t\geq 0}$ is
the exponential family corresponding to $\mathcal{H}$ and $\mu_0$.  We
assume that $S$ is finite to keep the presentation as simple and
non-technical as possible, although most results of this paper extend
to continuous state spaces under standard regularity assumptions.

Note that if $\mathcal{H}_t \equiv 0$ for all $t \geq 0$, then
$\mu_t=\mu_0$ for all $t \geq 0$. In this case, the measures are
invariant for a Markov transition semigroup $(p_t)_{t \geq 0}$, i.e.
\[
p^*_{t-s}\mu_s = \mu_t \qquad \forall t \geq s \geq 0,
\]
if, for example, the generator satisfies a detailed balance condition
w.r.t. $\mu_0$. Here $p^*_{t-s}$ stands for the adjoint of the matrix
$p_{t-s}$, i.e.
\[
(p^*_{t-s} \mu_s)(y) := (\mu_s p_{t-s})(y) = 
\sum_{x \in S} \mu_s(x) p_{t-s}(x,y).
\]
It is well known that in this time-homogeneous case, $L^p$ and $L^p
\to L^q$ bounds for the transition operators $p_t$ follow from
Poincar\'e inequalities (i.e. spectral gap estimates) and logarithmic
Sobolev inequalities w.r.t. the measure $\mu_0$, respectively. We
refer to \cite{SC} and references therein for more background and
results on corresponding bounds for time-homogeneous Markov chains
(see also
\cite{AldFill,BobTet06,ChenMuFa-ergod,DSC-logsob,DiaStr91,LePeWi}).
Such bounds are exploited in the mathematical analysis of Markov Chain
Monte Carlo (MCMC) methods for approximating expectation values w.r.t.
the measure $\mu_0$, see e.g.
\cite{Dia-revolution,DSC-metro,MonteTeta} and references therein, as
well as the above references.

We now introduce the class of non-Markovian, time-inhomogeneous
transition operators for which we will prove corresponding $L^p$ and
$L^p \to L^q$ bounds. Let ${\mathcal L}_t$, $t \geq 0$, be generators
($Q$-matrices) of Markov processes on $S$ satisfying the detailed
balance conditions
\begin{equation}
  \label{eq:db}
  \mu_t(x){\mathcal L}_t(x,y) = \mu_t(y){\mathcal L}_t(y,x)
      \quad \forall\ t\geq0, \; x,y\in S.
\end{equation}
In particular, $\mathcal L_t^\ast\mu_t=0$, i.e.,
\begin{equation}     \label{eq:INV1}
\int \L_tf\,d\mu_t = \sum_{x \in S} (\L_tf)(x)\mu_t(x) = 0
\qquad \mbox{for all }f:S\to\r\mbox{ \ and \ }t\geq 0,
\end{equation}
where
\[
(\L_tf)(x) := \sum_{y \in S} \L_t(x,y)f(y).
\]
We assume that $\mathcal{L}_t(x,y)$ depends continuously on $t$, and
we fix a continuous positive function $t \mapsto \lambda_t$. For $0
\leq s \leq t < \infty$, let $q_{s,t}(x,y)$, $x,\,y\in S$, denote
the solutions of the backward equations
\[
- \frac{\partial}{\partial s} q_{s,t}(x,y) = \lambda_s (\L_s q_{s,t})(x,y)
- H_s(x)q_{s,t}(x,y),
  \qquad s \in [0,t],
\]
with terminal condition $q_{t,t}(x,y)=\delta_{x,y}$, where
\[
H_t(x)\ :=\ -\frac{\partial}{\partial t}\log\mu_t(x)
= \frac{\partial}{\partial t} \mathcal{H}_t
- \int \frac{\partial}{\partial t} \mathcal{H}_t\,d\mu_t
\]
denotes the negative logarithmic time derivative of the measures
$\mu_t$.  Since the state space is finite, the solutions are unique.
For $f: S \to \erre$, $q_{s,t}f$ satisfies the backward equation
\begin{equation}
  \label{eq:BWE}
 - \frac{\partial}{\partial s} q_{s,t}f = \lambda_s \L_s q_{s,t}f - H_sq_{s,t}f,
  \qquad s \in [0,t],
\end{equation}
with terminal condition $q_{t,t}f=f$.

As a consequence of the detailed balance condition (\ref{eq:db}) and
the backward equation (\ref{eq:BWE}), it is not difficult to verify
that the invariance property
\begin{equation}
  \label{eq:InvProp}
  q^*_{s,t}\mu_s = \mu_t
\end{equation}
holds for all $t \geq s \geq 0$, see Proposition \ref{prop:prelim}
below.

Moreover, it can be shown that $q_{s,t}f$ is also the unique
solution of the corresponding forward equation
\begin{equation}     \label{eq:FWE}
  \frac{\partial}{\partial t} q_{s,t}f = q_{s,t}(\lambda_t\L_t f - H_t f),
  \qquad t\in [s, \infty ),
\end{equation}
with initial condition $q_{s,s}f=f$. As a consequence, a
probabilistic representation of $q_{s,t}$ is given by the
Feynman-Kac formula
\begin{equation}       \label{eq:FK}
  (q_{s,t}f)(x) = \E_{s,x} \big[ e^{-\int_s^t H_r(X_r)\,dr} f(X_t)
  \big]\qquad\mbox{for all }x\in S,
\end{equation}
where $(X_t)_{t \geq s}$ is a time-inhomogeneous Markov process
w.r.t. $\P_{s,x}$ with generators $\lambda_t \L_t$ and initial condition
$X_s=x$ $\P_{s,x}$-a.s., see e.g. \cite{GS-II}, \cite{Guli}. Let
\begin{equation}       \label{eq:TSG}
  (p_{s,t}f)(x) = \E_{s,x} \big[  f(X_t)
  \big]
\end{equation}
denote the transition operators of this process.

In Theorems \ref{thm:mainp}, \ref{thm:main2} and \ref{thm:lsi}, and
Corollary \ref{cor:loc} below, we derive $L^p$ and $L^p \to L^q$
bounds for the non-Markovian operators $q_{s,t}$. This is partially
similar to the case of time-homogeneous Markov semigroups, but some
important differences occur. In particular, since the operators
$q_{s,t}$ in general are not contractions on $L^\infty$, the resulting
$L^p$ bounds depend crucially on the value of $p$.

\begin{rmk}
  (i) The non-Markovian transition operators $q_{s,t}$ arise naturally
  in the analysis of sequential Markov Chain Monte Carlo methods. For
  a detailed description of sequential MCMC methods and related
  stochastic processes we refer to \cite{DMDJ,EM08a}.

  (ii) Evolution operators such as $q_{s,t}$, also in continuous time
  and space, have been investigated intensively (see e.g. the
  monograph \cite{Guli} and references therein). However, they are
  usually considered in $L^p$ spaces with respect to a \emph{fixed}
  reference measure. In the applications to sequential MCMC methods we
  are interested in, the time-varying measures $\mu_t$ are given a
  priori, and the analysis on the corresponding $L^p$ spaces is
  crucial. Moreover, a setup with time-varying reference measure is
  more natural in many respects. In particular it provides a
  generalization of the $L^p$ theory of symmetric Markov semigroups.

  (iii) There are several generalizations of symmetric
  time-homogeneous Markov semigroups to the case of time-dependent
  reference measures. One possibility is to consider
  time-inhomogeneous Markov semigroups with infinitesimal generators
  satisfying (\ref{eq:db}). However, these semigroups usually do not
  satisfy the invariance property (\ref{eq:InvProp}). Alternatively,
  there exist time-inhomogeneous Markov processes with transition
  semigroup satisfying (\ref{eq:InvProp}). However, the corresponding
  generators depend on $H_t$ in a non-local way, since increasing the
  mass at one point and decreasing the mass at another point requires
  an additional drift of the process between the points. This
  illustrated in the example below. The third possibility, that we
  consider here, is to replace the Markov semigroup by a Feynman-Kac
  semigroup as defined above. Although these semigroups do not
  correspond to a classical Markov process, they can be approximated
  by a stochastic approach combining Markov Chain Monte Carlo and
  importance sampling concepts, cf.  \cite{DMDJ,EM08a}.
\end{rmk}

\begin{ex} \label{ex:primo} To illustrate our setup and, in
  particular, the last remark, we consider a simple situation where
  the weights of the underlying measure $\mu_t$ vary only at two
  points: suppose that $S=\{0,1,2,\ldots,n\}$ for some $n \in
  \mathbb{N}$, $\mu_0$ is the uniform distribution on $S$, and
  \[
  \mu_t(i) = \mu_0(i) = \frac{1}{n+1}
  \]
  for all $t \geq 0$ and $i=1,2,\ldots,n-1$.  Hence only the weights
  $\mu_t(0)$ and $\mu_t(n)$ are not constant in $t$ and
  $\frac{d}{dt}\mu_t(n)=-\frac{d}{dt}\mu_t(0)$, i.e. mass is
  transferred from $0$ to $n$ and viceversa. We describe three types
  of transition operators satisfying the invariance property
  (\ref{eq:InvProp}) in this situation.

  \noindent (i) Suppose that (in contrast to our setup above) $q_{s,t}$ are the
  transition functions of an ordinary time-inhomogeneous Markov
  process on $S$ with generators $\L_t$ satisfying $\L_t(x,y)=0$
  whenever $|x-y|>1$, i.e. the process only jumps to neighbor sites.
  An elementary computation based on the forward equation shows that
  in this case the invariance property (\ref{eq:InvProp}) holds for
  all $0 \leq s \leq t$ if and only if
  \[
  \L_t(y-1,y) - \L_t(y,y-1) =
  (n+1) \, \frac{d}{dt}\mu_t(0)
  \]
  for all $y \in \{1,2,\ldots,n\}$ and $t \geq 0$. Hence to
  compensate for the change of measure at two points, a global drift
  growing linearly with the distance of the two points is required.
  This is inconvenient for the numerical applications we are
  interested in, see \cite{EM08a}.

  \noindent (ii) A second possibility (consistent with our setup)
  would be to choose
  \[
  q_{s,t}(x,y) = \frac{\mu_t(x)}{\mu_s(x)} \, \delta_{x,y}.
  \]
  This corresponds to the case $\lambda_t=0$ for all $t \geq 0$ in the
  framework introduced above, i.e. the underlying Markov process does
  not move at all. In this case $L^p$ bounds for the operators
  $q_{s,t}$ depend on the $L^\infty$ norm of the relative density
  $\mu_t(x)/\mu_s(x)$, which is also inconvenient for the applications
  we are interested in.

  \noindent (iii) A third possibility (again consistent with our setup
  above) is to choose for $\L_t$ the generator of a Random Walk
  Metropolis Chain with respect to the measure $\mu_t$, i.e.
\begin{equation}     \label{eq:RWM}
\L_t(x,y) =
\begin{cases}
\ds \frac12 \min \Big( \frac{\mu_t(x)}{\mu_s(x)},1 \Big), & \text{if }
|x-y|=1,\\
0, & \text{if }
|x-y|>1,
\end{cases}
\end{equation}
and to define $q_{s,t}$ by (\ref{eq:BWE}). Our first main result,
Theorem \ref{thm:mainp} below, shows that in this case for $p \geq 2$,
the $L^p$ bound
\[
\| q_{s,t}f \|_{L^p(\mu_s)} \leq 2^{1/4} \| f \|_{L^p(\mu_t)}
\]
holds for all $0 \leq s \leq t$ and $f:S \to \erre$ provided
$\lambda_t$ is large enough.
\end{ex}

The remaining content of the paper is organized as follows: in Section
\ref{sec:prima} we collect some properties of the propagators
$q_{s,t}$ that are frequently used in subsequent sections. Sections
\ref{sec:glob} and \ref{sec:improved} deal with $L^p$ bounds under the
assumption that global Poincar\'e inequalities hold. In Section
\ref{sec:loc} we apply the results to derive $L^p$ estimates on a
subset that is invariant w.r.t. the underlying dynamics from
Poincar\'e inequalities on this subset. Finally, in Section
\ref{sec:LS} we prove an $L^p \to L^q$ estimate assuming that a
(time-dependent) logarithmic Sobolev inequality is satisfied.

\section{Preliminaries and notation}     \label{sec:prima}
We shall denote throughout the paper the expectation value of a
function $f:S\to\erre $ with respect to a measure $\nu$ on $S$ by
\[
\ip{f}{\nu} := \int f\,d\nu\ = \sum_{x\in S} f(x)\,\nu (x).
\]
The positive and negative part of a function $f$ are defined,
respectively, by
\[
f^+ = \max(f,0), \qquad f^- = \max(-f,0),
\]
so that $f=f^+-f^-$.

For $t \geq 0$ and $f,\,g:S \to\erre$, the Dirichlet form $\EE_t(f,g)$
corresponding to the self-adjoint operator $\L_t$ on $L^2(S,\mu_t)$ is
given by
\begin{equation}     \label{eq:df}
\EE_t(f,g) = - \int f \L_t g\,d\mu_t = \frac12 \sum_{x,y\in S}
(f(y)-f(x))(g(y)-g(x))\L_t(x,y)\,\mu_t(x).
\end{equation}
We shall also use the shorthand notation $\EE_t(f)=\EE_t(f,f)$.

Note that by the definition of $H_t$ one immediately has
\begin{equation}
 \label{eq:timedep}
 \mu_t(x)\ =\ \exp \left(-\int_0^tH_s(x)\,ds\right)\,\mu_0(x),
\end{equation}
and
 \begin{equation}
 \label{eq:centered}
 \ip{H_t}{\mu_t} \ =\ 0 \qquad \forall t\geq 0.
 \end{equation}
In fact, since $\mu_t(S)=1$ for all $t \geq 0$, one has
\[
 \ip{H_t}{\mu_t} = - \sum_{x \in S} \mu_t(x)
 \frac{\partial}{\partial t} \log \mu_t(x) = - \frac{\partial}{\partial t}
 \sum_{x \in S} \mu_t(x) = 0
\]
As a consequence of (\ref{eq:centered}),
\[
H_t = \frac{\partial}{\partial t}\mathcal{H}_t - \ip{\frac{\partial}{\partial
t}\mathcal{H}_t}{\mu_t}.
\]

In the following proposition we collect some properties of the
operators $q_{s,t}$ which will be used throughout the paper.
\begin{prop}     \label{prop:prelim}
  For all $0\le s \le t\le u$,
   \begin{itemize}
  \item[(i)] $q_{s,t}q_{t,u}=q_{s,u}\ $ (Chapman-Kolmogorov equation).
  \item[(ii)] If $f \geq 0$, then $q_{s,t}f \geq 0$ (Positivity
    preserving property).
  \item[(iii)] If $f \geq 0$, then $q_{s,t}f \leq \exp\left( {\int_s^t
        \max_{x\in S} H_r^-(x)\,dr}\right)\, p_{s,t}f\ $ (Pointwise
    estimate).
  \item[(iv)]  $q^*_{s,t}\mu_s = \mu_t\ $ (Invariance).
  \item[(v)]  $\|q_{s,t}f\|_{L^1(\mu_s)} \leq \|f\|_{L^1(\mu_t)}\ $ ($L^1$ bound).
  \item[(vi)] $\|q_{s,t}f\|_{L^p(\mu_s)} \leq
    \exp\left(\frac{p-1}{p}{\int_s^t \max_{x\in S}
        H_r^-(x)\,dr}\right) \, \|f\|_{L^p(\mu_t)}\ $ (Rough $L^p$
    bound).
  \end{itemize}
\end{prop}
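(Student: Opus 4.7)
The six items form a cascade, with the later ones leaning on the earlier ones, so I would address them in order.

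For (i), I would invoke uniqueness of the backward equation (\ref{eq:BWE}). Fixing $u$ and a test function $f$, both $R_s := q_{s,t}q_{t,u}f$ and $q_{s,u}f$ solve (\ref{eq:BWE}) on $s\in[0,t]$ (the former because $q_{s,t}$ acts on the fixed vector $q_{t,u}f$) with the common value $q_{t,u}f$ at $s=t$, so uniqueness closes it. For (ii), the Feynman-Kac formula (\ref{eq:FK}) exhibits $q_{s,t}f$ as an expectation of a non-negative random variable whenever $f\geq 0$. For (iii), starting again from (\ref{eq:FK}), the pointwise bound $e^{-\int_s^t H_r(X_r)\,dr} \leq \exp(\int_s^t \max_x H_r^-(x)\,dr)$ factors out of the expectation, leaving $p_{s,t}f$ as defined in (\ref{eq:TSG}).

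For (iv), I would differentiate $F(s):=\ip{q_{s,t}f}{\mu_s}$ on $[0,t]$. From the definition of $H_t$ one has $\partial_s\mu_s(x) = -H_s(x)\mu_s(x)$ pointwise, and combining this with the backward equation (\ref{eq:BWE}) gives
$$F'(s) = \ip{-\lambda_s\L_s q_{s,t}f + H_s q_{s,t}f}{\mu_s} - \ip{H_s q_{s,t}f}{\mu_s} = -\lambda_s\ip{\L_s q_{s,t}f}{\mu_s} = 0$$
by (\ref{eq:INV1}), so $F(s)\equiv F(t)=\ip{f}{\mu_t}$, which is exactly (\ref{eq:InvProp}). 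Item (v) is then a one-line consequence of (ii) and (iv): $\|q_{s,t}f\|_{L^1(\mu_s)}=\ip{|q_{s,t}f|}{\mu_s} \leq \ip{q_{s,t}|f|}{\mu_s}=\ip{|f|}{\mu_t}$.

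The main work is (vi). I would apply H\"older's inequality inside the expectation (\ref{eq:FK}), splitting the integrand as $e^{-\int_s^t H_r(X_r)\,dr}f(X_t) = \bigl(e^{-\frac{p-1}{p}\int_s^t H_r(X_r)\,dr}\bigr)\bigl(e^{-\frac{1}{p}\int_s^t H_r(X_r)\,dr}f(X_t)\bigr)$ with conjugate exponents $p/(p-1)$ and $p$. This yields the pointwise inequality
$$(q_{s,t}|f|)^p \leq (q_{s,t}\mathbf{1})^{p-1}\cdot q_{s,t}(|f|^p).$$
Since $p_{s,t}\mathbf{1}=1$, applying (iii) to $f\equiv 1$ gives $q_{s,t}\mathbf{1}\leq\exp(\int_s^t\max_x H_r^-(x)\,dr)$; integrating the displayed inequality against $\mu_s$ and using invariance (iv) to rewrite $\ip{q_{s,t}(|f|^p)}{\mu_s}=\ip{|f|^p}{\mu_t}$ then yields (vi), since $|q_{s,t}f|\leq q_{s,t}|f|$ by (ii). The only delicate point is this H\"older step; an alternative route is Riesz-Thorin interpolation between (v) (giving $L^1$-contraction) and (iii) (giving an $L^\infty$-bound with constant $\exp(\int\max H_r^-)$), which produces the same geometric-mean factor $(p-1)/p$ in the exponent.
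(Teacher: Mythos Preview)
Your proof is correct throughout and largely parallels the paper's: items (ii)--(v) are argued essentially identically. Two places differ. For (i) you appeal to uniqueness of the backward equation, whereas the paper invokes the Markov property of the underlying process $(X_t,\P_{s,x})$ via the Feynman-Kac representation; both are one-line arguments on a finite state space. For (vi) your primary route---H\"older inside the Feynman-Kac expectation to obtain the pointwise bound $(q_{s,t}|f|)^p \le (q_{s,t}\mathbf{1})^{p-1}\,q_{s,t}(|f|^p)$, followed by (iii) and (iv)---is more hands-on than the paper's, which simply interpolates by Riesz--Thorin between the $L^1$ bound (v) and the $L^\infty$ bound coming from (iii). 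Your direct argument has the virtue of being self-contained and exhibits the factor $(p-1)/p$ transparently as a H\"older exponent; the paper's interpolation is shorter to state and makes clear why the constant is the geometric mean of the endpoint constants. You already note the interpolation route as an alternative, so nothing is missing.
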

\begin{proof}
  (i) is a consequence of the Markov property of the process
  $(X_t,\P_{s,x})$, and (ii), (iii) are immediate by the
  Feynman-Kac representation (\ref{eq:FK}). Similarly, (iv) is an elementary
  consequence of (\ref{eq:timedep}), (\ref{eq:BWE}), and
  (\ref{eq:INV1}), which imply
  $$\frac{\partial}{\partial s}\langle q_{s,t}f,\mu_s\rangle\ =\
  -\langle H_sq_{s,t}f,\mu_s\rangle -\lambda_s\,\langle
  \mathcal{L}_sq_{s,t}f,\mu_s\rangle\, +\,\langle
  H_sq_{s,t}f,\mu_s\rangle \ =\ 0,$$
  and hence $\langle q_{s,t}f,\mu_s  \rangle
  =\langle q_{t,t}f,\mu_t\rangle =\langle f,\mu_t\rangle$ for all
  $f:S\to\mathbb{R}$ and $s\in [0,t]$.\smallskip\\
    In order to prove (v), take $f
  \geq 0$. Then, by (ii) and (iv),
  \[
  \|q_{s,t}f\|_{L^1(\mu_s)} = \ip{q_{s,t}f}{\mu_s} =
  \ip{f}{\mu_sq_{s,t}} = \ip{f}{\mu_t} = \|f\|_{L^1(\mu_t)}.
  \]
  The general case follows by the decomposition $f=f^+-f^-$ with
  $f^+$, $f^- \geq 0$, and using the linearity of
  $q_{s,t}$.\smallskip\\
  Finally, (vi) is a consequence of (iii) when $p=\infty $, and of (v)
  when $p=1$. The assertion for general $p\in [1,\infty ]$ then
  follows by the Riesz-Thorin interpolation theorem, see
  e.g.~\cite[{\S}1.1.5]{Dav-HK}.
\end{proof}

\section{Global $L^p$ estimates}     \label{sec:glob}
Setting
\[
\mathcal{K}_t := \big\{ f:S \to \erre \; : \; \int f\,d\mu_t =0, \;
f\not\equiv 0 \big\},
\]
let
\[
C_t := \sup_{f \in \mathcal{K}_t}\, \frac1{\EE_t(f)} \int f^2\,d\mu_t
\]
denote the (possibly infinite) inverse spectral gap of
$\mathcal{L}_t$, and define
\[
A_t := \sup_{f \in \mathcal{K}_t}\, \frac1{\EE_t(f)} \int (-H_t)f^2\,d\mu_t,
\qquad
B_t := \sup_{f \in \mathcal{K}_t}\, \frac1{\EE_t(f)}
       \Big| \int_S H_tf\,d\mu_t \Big|^2.
\]
Thus $C_t$, $A_t$ and $B_t$ are the optimal constants in the global
Poincar\'{e} inequalities
\begin{align}
\label{eq:P3} {\rm Var}_{\mu_t}(f) &\leq C_t\cdot
\mathcal{E}_t(f) \qquad \forall f:S \to \erre,\\
  \label{eq:P1}
-\int H_t\, \left( f-\int f\, d\mu_t\right)^2\, d\mu_t &\leq
A_t\cdot
\mathcal{E}_t(f)\qquad \forall f:S\to \erre,\\
 \label{eq:P2}
\left|\int H_t\,  f\, d\mu_t\right|^2 &\leq B_t\cdot
\mathcal{E}_t(f)\qquad \forall f:S \to \erre,
\end{align}
where ${\rm Var}_{\mu_t}$ denotes the variance w.r.t. $\mu_t$.

Our aim in this section is to bound the $L^p \to L^p$ norms of the
operators $q_{s,t}$ in terms of the constants $A_t$, $B_t$ and $C_t$.

\begin{rmk}
  (i) There exist efficient techniques to obtain upper bounds for
  $C_t$, for example the method of canonical paths, comparison methods
  (see e.g. \cite{SC}), as well as decomposition methods (see e.g.
  \cite{JSTV}). Variants of these techniques can be applied to
  estimate $A_t$ and $B_t$ as well. 

  (ii) Clearly, one has
  \begin{align}
    \label{eq:atct} A_t &\leq C_t \cdot\max_{x\in
      S}{H}_t^-(x),\\
    \label{eq:btct} B_t &\leq  C_t \cdot {\rm Var}_{\mu_t}({H}_t)\, ,
  \end{align}
  so an upper bound on $C_t$ yields upper bounds on $A_t$ and $B_t$.
\end{rmk}

\begin{exc}
  In the situation of Example (iii) above, suppose that
  \[
  |H_t(x)| = \frac{\big|\frac{d}{dt}\mu_t(x)\big|}{\mu_t(x)} \leq 1
  \]
  for all $x \in S$. Then one can prove the upper bounds
  \[
  A_t \leq 4(n+1), \qquad B_t \leq 8(n+1)
  \]
  for all $t \geq 0$ (see the Appendix). On the other hand, in this
  case the inverse spectral gap $C_t$ is of order $n^2$.
\end{exc}

Let us start with a basic estimate.
\begin{lemma}
For all $s\ge 0$ and $f:S\to\r $, one has
\begin{equation}     \label{eq:A}
  - \int H_s f^2\,d\mu_s \ \leq\ A_s\, \EE_s(f)
       + 2B_s^{1/2} |\ip{f}{\mu_s}| \EE_s(f)^{1/2}.
\end{equation}
\end{lemma}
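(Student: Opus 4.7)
The plan is to decompose $f$ into its mean and its centered part and apply the two Poincar\'e-type inequalities \eqref{eq:P1} and \eqref{eq:P2} directly. Set $\bar f := \ip{f}{\mu_s}$ and $g := f - \bar f$, so that $g \in \mathcal{K}_s$ (or $g \equiv 0$, in which case both sides vanish once we also use \eqref{eq:centered}). Since $f^2 = g^2 + 2\bar f g + \bar f^2$, integrating against $-H_s\,d\mu_s$ gives
\[
-\int H_s f^2\,d\mu_s \;=\; -\int H_s g^2\,d\mu_s \;-\; 2\bar f \int H_s g\,d\mu_s \;-\; \bar f^2 \int H_s\,d\mu_s.
\]
The last term vanishes by \eqref{eq:centered}, and in the middle term we may replace $g$ by $f$ without change for the same reason: $\int H_s g\,d\mu_s = \int H_s f\,d\mu_s$.

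Next I would bound each surviving term by the appropriate optimal constant. Since $g \in \mathcal{K}_s$, the definition \eqref{eq:P1} of $A_s$ yields
\[
-\int H_s g^2\,d\mu_s \;\leq\; A_s\,\EE_s(g) \;=\; A_s\,\EE_s(f),
\]
where the last equality uses that the Dirichlet form \eqref{eq:df} depends only on differences $f(y)-f(x)$ and so is invariant under the additive constant $-\bar f$. Similarly, by \eqref{eq:P2} applied to $g$,
\[
\Big|\int H_s f\,d\mu_s\Big|^2 \;=\; \Big|\int H_s g\,d\mu_s\Big|^2 \;\leq\; B_s\,\EE_s(g) \;=\; B_s\,\EE_s(f),
\]
so $|\int H_s f\,d\mu_s| \leq B_s^{1/2}\EE_s(f)^{1/2}$.

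Combining these two estimates with the identity above, and bounding $-2\bar f \int H_s f\,d\mu_s \leq 2|\bar f|\cdot B_s^{1/2}\EE_s(f)^{1/2}$, produces precisely \eqref{eq:A}. There is no real obstacle here; the only subtlety worth flagging is justifying the replacement of $g$ by $f$ inside the single-integral term using \eqref{eq:centered}, which is what makes the cross term come out with the factor $|\ip{f}{\mu_s}|$ rather than $|\ip{g}{\mu_s}|=0$, and noting the invariance of $\EE_s$ under additive constants so that $\EE_s(g)=\EE_s(f)$.
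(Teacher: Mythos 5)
Your proof is correct and follows essentially the same route as the paper's: decompose $f$ into its $\mu_s$-mean plus the centered part, use $\ip{H_s}{\mu_s}=0$ to eliminate the constant term (and to pass from $g$ to $f$ in the cross term), and apply \eqref{eq:P1} and \eqref{eq:P2} together with the translation invariance of $\EE_s$. The extra care you take with the degenerate case $g\equiv 0$ is fine but not needed beyond a remark.
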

\begin{proof}
  Set $\bar{f}_s = f - \ip{f}{\mu_s}$. Then, observing that
  $\ip{H_s}{\mu_s}=0$ and $\EE_s(\bar{f}_s)=\EE_s(f)$, (\ref{eq:P1})
  and (\ref{eq:P2}) imply
  \begin{align*}
    - \int H_s f^2\,d\mu_s &\ =\ - \int H_s (\bar{f}_s^2 + \ip{f}{\mu_s}^2
            + 2\bar{f}_s \ip{f}{\mu_s})\,d\mu_s\\
      &\ \leq\ A_s\, \EE_s(f) + 2 |\ip{f}{\mu_s}|\, B_s^{1/2}\, \EE_s(f)^{1/2},
  \end{align*}
  which proves the claim.
\end{proof}
In the following proposition we establish an integral inequality
for the $L^p(\mu_s)$ norm of $q_{s,t}f$.
\begin{prop}     \label{prop:B}
  Let $p \geq 2$ and assume that
  \[
  \lambda_s > pA_s/4 \qquad \forall s \in [0,t].
  \]
  Then for all $s\in[0,t]$ and for all $f:S\to\erre$,
  \begin{equation}     \label{eq:recurs}
    \bip{|q_{s,t}f|^p}{\mu_s} \ \leq \ \ip{|f|^p}{\mu_t}
        + p(p-1) \int_s^t \frac{B_r}{4\lambda_r-pA_r}
                     \big\langle (q_{r,t}|f|)^{p/2},\mu_r\big\rangle^2\,dr
  \end{equation}
\end{prop}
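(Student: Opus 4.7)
The plan is to derive a pointwise differential inequality for $r\mapsto \langle g_r^p,\mu_r\rangle$, with $g_r := q_{r,t}|f|$, and integrate it over $r\in[s,t]$. First I would reduce to the case $f\geq 0$: since $q_{s,t}$ is linear and positivity preserving by Proposition~\ref{prop:prelim}(ii), writing $f=f^+-f^-$ gives $|q_{s,t}f|\leq q_{s,t}|f|$, hence $\langle |q_{s,t}f|^p,\mu_s\rangle\leq\langle(q_{s,t}|f|)^p,\mu_s\rangle$ and it suffices to prove (\ref{eq:recurs}) with $f$ replaced by $|f|$. For the remainder of the argument I fix $f\geq 0$ and write $g_r:=q_{r,t}f\geq 0$.

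The next step is the key computation. The backward equation (\ref{eq:BWE}) reads $\partial_r g_r=-\lambda_r\L_r g_r+H_r g_r$, and from (\ref{eq:timedep}) one has $\partial_r\mu_r=-H_r\mu_r$ pointwise. Using $\partial_r g_r^p=pg_r^{p-1}\partial_r g_r$ and the Dirichlet form identity $\EE_r(u,v)=-\int u\,\L_r v\,d\mu_r$, a short calculation gives
\[
\frac{\partial}{\partial r}\langle g_r^p,\mu_r\rangle\;=\;p\lambda_r\,\EE_r(g_r^{p-1},g_r)\;+\;(p-1)\langle H_r g_r^p,\mu_r\rangle.
\]
To turn this into a useful upper bound on $-\partial_r\langle g_r^p,\mu_r\rangle$ I would invoke two inputs. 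First, the elementary pointwise inequality $(a^{p-1}-b^{p-1})(a-b)\geq \frac{4(p-1)}{p^2}(a^{p/2}-b^{p/2})^2$ for $a,b\geq 0$ (a one-line Cauchy--Schwarz argument), inserted into the sum representation (\ref{eq:df}) of the Dirichlet form, gives $\EE_r(g_r^{p-1},g_r)\geq \frac{4(p-1)}{p^2}\EE_r(g_r^{p/2})$. Second, applying (\ref{eq:A}) with $f$ replaced by $g_r^{p/2}$ bounds $-\langle H_r g_r^p,\mu_r\rangle$ by $A_r\,\EE_r(g_r^{p/2})+2B_r^{1/2}\langle g_r^{p/2},\mu_r\rangle\,\EE_r(g_r^{p/2})^{1/2}$.

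Combining these two facts yields
\[
-\frac{\partial}{\partial r}\langle g_r^p,\mu_r\rangle\;\leq\;-\frac{p-1}{p}(4\lambda_r-pA_r)\,\EE_r(g_r^{p/2})\;+\;2(p-1)B_r^{1/2}\langle g_r^{p/2},\mu_r\rangle\,\EE_r(g_r^{p/2})^{1/2}.
\]
Under the standing hypothesis $\lambda_r>pA_r/4$, the coefficient of $\EE_r(g_r^{p/2})$ is strictly negative, so the right-hand side is a concave quadratic in the variable $u:=\EE_r(g_r^{p/2})^{1/2}$. Completing the square (equivalently, maximizing over $u\geq 0$) eliminates the Dirichlet form entirely and produces the clean pointwise bound $\frac{p(p-1)B_r}{4\lambda_r-pA_r}\langle g_r^{p/2},\mu_r\rangle^2$. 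Integrating from $s$ to $t$ and observing that $g_t=f$, so that $\langle g_t^p,\mu_t\rangle=\langle f^p,\mu_t\rangle$, gives exactly (\ref{eq:recurs}).

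The main obstacle is the constant bookkeeping in the square-completion step: the factor $4(p-1)/p^2$ from the pointwise Dirichlet-form inequality must interact just right with $A_r$ and $B_r^{1/2}$ from the lemma to produce the stated prefactor $p(p-1)/(4\lambda_r-pA_r)$, and the assumption $\lambda_r>pA_r/4$ is precisely the margin that keeps the quadratic concave. A minor technical point is that $g_r$ may vanish at some points, which would make $g_r^{p-1}$ non-smooth if $1<p<2$; because we assume $p\geq 2$ this is not an issue, but if desired one can regularize via $f\leadsto f+\varepsilon$ and pass to the limit.
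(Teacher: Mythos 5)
Your proposal is correct and follows essentially the same route as the paper: differentiate $\langle (q_{r,t}f)^p,\mu_r\rangle$ via the backward equation and $\partial_r\mu_r=-H_r\mu_r$, lower-bound $\EE_r(g_r^{p-1},g_r)$ by $\tfrac{4(p-1)}{p^2}\EE_r(g_r^{p/2})$, control the $H_r$ term by the lemma giving (\ref{eq:A}) applied to $g_r^{p/2}$, optimize the resulting concave quadratic in $\EE_r(g_r^{p/2})^{1/2}$, and integrate. The constants all check out, and the reduction to $f\geq 0$ via $|q_{s,t}f|\leq q_{s,t}|f|$ is exactly the paper's final step, merely moved to the beginning.
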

\begin{proof}
Recalling (\ref{eq:timedep}), the backward equation (\ref{eq:BWE})
allows us to write, for $f:S\to \erre_+$ and $r\in [0,t]$,
\begin{align*}
-\frac{\partial}{\partial r} \int (q_{r,t}f)^p\,d\mu_r &\ =\ p \int
(q_{r,t}f)^{p-1} (\lambda_r\L_rq_{r,t}f-H_rq_{r,t}f)\,d\mu_r
+ \int H_r (q_{r,t}f)^p\,d\mu_r\\
&\ =\ -p \lambda_r \EE_r\big(q_{r,t}f,(q_{r,t}f)^{p-1}\big)
   - (p-1) \int H_r (q_{r,t}f)^p\,d\mu_r,
\end{align*}
where we have used the definition of the Dirichlet form $\EE_r$ in
the second step. Applying the inequality
\begin{equation}     \label{eq:str}
 \EE_r(\phi,\phi^{ p-1 })\ \geq \frac{4(p-1)}{p^{2}}\,
      \EE_r(\phi^{ p/2 })
 \qquad \forall \phi:S \to \erre^+
 \end{equation}
(see e.g. \cite[p.~242]{DeuStr}), we obtain
\[
-\frac{\partial}{\partial r} \int (q_{r,t}f)^p\,d\mu_r \ \leq\  -
\frac{4(p-1)}{p} \lambda_r
\EE_r\big((q_{r,t}f)^{p/2},(q_{r,t}f)^{p/2}\big) - (p-1) \int H_r
(q_{r,t}f)^p\,d\mu_r.
\]
Estimate (\ref{eq:A}) combined with the previous inequality yields
\begin{align*}
- \frac{\partial}{\partial r} \int (q_{r,t}f)^p\,d\mu_r \ \leq&\
- \frac{(p-1)}{p}(4\lambda_r - pA_r) \EE_r\big((q_{r,t}f)^{p/2}\big)\\
&\ + 2(p-1)B_r^{1/2} \big|\ip{(q_{r,t}f)^{p/2}}{\mu_r}\big|
    \EE_r\big((q_{r,t}f)^{p/2}\big)^{1/2},
\end{align*}
hence also, using the elementary inequality $-ax + 2bx^{1/2} \leq
b^2/a$, where $a,b,x \geq 0$,
\[
- \frac{\partial}{\partial r} \int (q_{r,t}f)^p\,d\mu_r \ \leq\
\frac{p(p-1)B_r}{4\lambda_r-pA_r} \ip{(q_{r,t}f)^{p/2}}{\mu_r}^2.
\]
Integrating this inequality from $s$ to $t$ with respect to $r$ we
get, recalling that $q_{t,t}f=f$,
\[
  \bip{(q_{s,t}f)^p}{\mu_s} \leq \ip{f^p}{\mu_t}
        + p(p-1) \int_s^t \frac{B_r}{4\lambda_r-pA_r}
                     \big\langle (q_{r,t}f)^{p/2},\mu_r\big\rangle^2\,dr.
\]
Since $|q_{s,t}f| \leq q_{s,t}|f|$, the claim is obtained applying
the above inequality to the positive function $|f|$.
\end{proof}

We can now prove our first main result.
\begin{thm}     \label{thm:mainp}
  Let $t \geq 0$ and $p \geq 2$. Assume that
  \begin{equation}     \label{eq:*}
  \lambda_s \ \geq\  \frac{p}{4}\, A_s\, +\, \frac{p(p+3)}{4}tB_s
  \end{equation}
  for all $s \in [0,t]$.
  Then we have, for all $s \in [0,t]$ and $f:S\to\r $,
  \begin{itemize}
  \item[(i)] $\|q_{s,t}f\|_{L^p(\mu_s)} \ \leq\ 2^{1/4} \|f\|_{L^p(\mu_t)}$;
  \item[(ii)] $\|q_{s,t}f\|_{L^p(\mu_s)}\ \leq\ \|f\|_{L^p(\mu_t)} +
    2^{1/4}\|f\|_{L^{p/2}(\mu_t)}$.
  \end{itemize}
\end{thm}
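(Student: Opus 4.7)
My proof plan is to leverage the integral inequality of Proposition \ref{prop:B} twice: first in a self-referential form to prove (i) by a Gronwall argument, and then by inserting (i) at the halved exponent $p/2$ to obtain the sharper bound (ii). Throughout I may assume $f\ge 0$: one has $|q_{s,t}f|\le q_{s,t}|f|$ by Proposition \ref{prop:prelim}(ii), and replacing $f$ by $|f|$ leaves all norms appearing in the statement unchanged.

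For (i), set $\phi(r):=\|q_{r,t}f\|_{L^p(\mu_r)}^p=\langle (q_{r,t}f)^p,\mu_r\rangle$. Since $\mu_r$ is a probability measure, Cauchy--Schwarz gives $\langle (q_{r,t}f)^{p/2},\mu_r\rangle^2\le \langle (q_{r,t}f)^p,\mu_r\rangle=\phi(r)$, so (\ref{eq:recurs}) becomes the self-contained inequality
\[
\phi(s)\ \le\ \phi(t)+\int_s^t\gamma(r)\,\phi(r)\,dr,\qquad
\gamma(r):=\frac{p(p-1)\,B_r}{4\lambda_r-pA_r}.
\]
Hypothesis (\ref{eq:*}) rewrites as $4\lambda_r-pA_r\ge p(p+3)\,t\,B_r$, so $\gamma(r)\le (p-1)/((p+3)t)$ and $\int_s^t\gamma\le (p-1)/(p+3)$. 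Applying the backward Gronwall lemma yields $\phi(s)\le \phi(t)\exp((p-1)/(p+3))$, and taking $p$-th roots reduces (i) to the elementary inequality $\exp((p-1)/(p+3))\le 2^{p/4}$, equivalently $4(p-1)\le p(p+3)\log 2$. The latter is a quadratic in $p$ whose discriminant $(3\log 2-4)^2-16\log 2$ is negative, so the inequality is strict for every real $p$, in particular for $p\ge 2$. This elementary analytic check is the only genuine obstacle in the argument.

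For (ii), I return to (\ref{eq:recurs}) but keep the factor $\langle(q_{r,t}f)^{p/2},\mu_r\rangle^2=\|q_{r,t}f\|_{L^{p/2}(\mu_r)}^p$ intact. The heart of the argument is the bound
\[
\|q_{r,t}f\|_{L^{p/2}(\mu_r)}\ \le\ 2^{1/4}\,\|f\|_{L^{p/2}(\mu_t)}\qquad\forall r\in[s,t].
\]
For $p\ge 4$ this is (i) at exponent $p/2\ge 2$; for $p=2$ it holds, with the better constant $1$, by the $L^1$-contraction from Proposition \ref{prop:prelim}(v); and for $p\in(2,4)$ it follows by Riesz--Thorin interpolation between these two endpoints. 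Each invocation is legitimate because hypothesis (\ref{eq:*}) at exponent $p$ is stronger than the same hypothesis at any exponent in $[2,p]$, so it also supplies the $p=2$ and $p/2$ instances of (i). Plugging this bound into (\ref{eq:recurs}) and using $\int_s^t\gamma\le(p-1)/(p+3)\le 1$ gives $\phi(s)\le\phi(t)+2^{p/4}\,\|f\|_{L^{p/2}(\mu_t)}^p$, and the subadditivity $(a+b)^{1/p}\le a^{1/p}+b^{1/p}$ (valid since $1/p\le 1$) yields (ii).
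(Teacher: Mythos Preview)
Your proof is correct, and your treatment of (ii) is essentially the paper's own argument. Your proof of (i), however, follows a genuinely different route.

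For (i), the paper argues by induction on a dyadic chain of exponents: it first uses the $L^1$ contraction to get the case $p=1$, interpolates to obtain all $p\in[1,2]$, and then shows that the bound at exponent $p/2$ plus (\ref{eq:recurs}) and the elementary inequality $2^{-p/4}+(p-1)/(p+3)\le 1$ yield the bound at exponent $p$. You instead feed the Cauchy--Schwarz estimate $\langle (q_{r,t}f)^{p/2},\mu_r\rangle^2\le\phi(r)$ back into (\ref{eq:recurs}) to obtain a closed integral inequality for $\phi$, and conclude by Gronwall. This is more direct---no induction, no interpolation in the proof of (i)---at the price of a slightly different elementary inequality, $\tfrac{p-1}{p+3}\le\tfrac{p}{4}\log 2$, which you verify cleanly via the discriminant. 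Note that your inequality is in fact weaker than the paper's (since $1-2^{-p/4}\le\tfrac{p}{4}\log 2$), reflecting the small loss inherent in the exponential from Gronwall; but the constant $2^{1/4}$ is reached either way. The paper's inductive scheme, on the other hand, has the side benefit of establishing (i) along the whole dyadic chain $\tilde p,2\tilde p,\dots,p$, which it then reuses for (ii); you recover what is needed for (ii) separately via interpolation between $L^1$ and $L^2$, which is equally valid.
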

\begin{proof}
  By Proposition \ref{prop:prelim} (v) we have that (i) always holds
  for $p=1$. Let us now prove that, for $p \geq 2$, (i) holds provided
  (\ref{eq:*}) is satisfied and (i) holds with $p$ replaced by $p/2$.
  In fact, in this case we have
  \[
  \big\langle (q_{r,t}|f|)^{p/2},\mu_r\big\rangle \leq
  2^{p/8} \ip{|f|^{p/4}}{\mu_t}^2 \leq
  2^{p/8} \ip{|f|^{p/2}}{\mu_t} \qquad \forall r \in [0,t].
  \]
  Then (\ref{eq:recurs}) implies
  \begin{equation*}
  \big\langle |q_{s,t}f|^p,\mu_s\big\rangle \leq \ip{|f|^p}{\mu_t}
     + 2^{p/4} p(p-1) \int_s^t \frac{B_r}{4\lambda_r-pA_r}
            \ip{|f|^{p/2}}{\mu_t}^2\,dr.
  \end{equation*}
  A simple calculation shows that, by (\ref{eq:*}),
  \begin{equation}     \label{eq:roco}
  \int_s^t \frac{pB_r}{4\lambda_r-pA_r}\,dr \leq \frac{1}{p+3}.
  \end{equation}
  Therefore, by the elementary inequality
  \[
  2^{-p/4} + \frac{p-1}{p+3} \leq 1 \qquad \forall p \geq 2,
  \]
  we obtain
  \begin{align*}
    2^{-p/4} \big\langle |q_{s,t}f|^p,\mu_s\big\rangle &\leq
       2^{-p/4} \ip{|f|^p}{\mu_t}
          + \frac{p-1}{p+3} \ip{|f|^{p/2}}{\mu_t}^2\\
    &\leq 2^{-p/4} \ip{|f|^p}{\mu_t}
          + \frac{p-1}{p+3} \ip{|f|^p}{\mu_t}\\
    &\leq \ip{|f|^p}{\mu_t},
  \end{align*}
  thus proving our claim.

  \noindent If (\ref{eq:*}) is satisfied for $p=2$, then
  (i) holds with $p=1$, hence with $p=2$. The Riesz-Thorin
  interpolation theorem then allows to conclude that (i) holds for all
  $p \in [1,2]$.

  \noindent If (\ref{eq:*}) is satisfied for some $p \geq 2$,
  then it also holds for all smaller values of $p$, including $p=2$.
  Choosing $n \in \mathbb{N}$ such that $\tilde{p}:=p 2^{-n} \in
  [1,2]$, we conclude that (i) holds for $\tilde{p}$, hence by
  induction it also holds for $p=\tilde{p} 2^n$. The proof of (i) is
  thus complete.

  Let us now prove (ii): by (\ref{eq:recurs}), (\ref{eq:roco}) and (i)
  we have, noting that $(p-1)/(p+3) \leq 1$ for all $p \geq 2$,
  \begin{align*}
  \big\langle |q_{s,t}f|^p,\mu_s\big\rangle &\leq \ip{|f|^p}{\mu_t}
     + p(p-1) \int_s^t \frac{B_r}{4\lambda_r-pA_r}\,dr
       \sup_{r\in[s,t]} \big\langle (q_{r,t}|f|)^{p/2},\mu_r\big\rangle^2\\
  &\leq \ip{|f|^p}{\mu_t} + \frac{p-1}{p+3}
        \sup_{r\in[s,t]} \big\langle (q_{r,t}|f|)^{p/2},\mu_r\big\rangle^2\\
  &\leq \ip{|f|^p}{\mu_t} + 2^{p/4} \ip{|f|^{p/2}}{\mu_t}^2,
  \end{align*}
  which implies (ii), in view of the elementary inequality
  $(a+b)^{1/p} \leq a^{1/p}+b^{1/p}$, with $a$, $b \geq 0$.
\end{proof}

\begin{exc}
  In the situation of Example (iii) above, provided the assumption on
  $H_t$ made above is satisfied, condition (\ref{eq:*}) is fulfilled
  if
  \[
  \lambda_s \geq \big(1 + 2(p+3)t \big)p(n+1).
  \]
\end{exc}

\section{Improved $L^p$ bounds for functions with $\mu_t$
mean zero}     \label{sec:improved}

It is well known (see e.g. \cite{SC}) that for a time-homogeneous
reversible Markov semigroup $p_t$ with stationary distribution $\mu$,
one has
\[
\| p_tf \|_{L^2(\mu)} \leq e^{-t/C} \| f \|_{L^2(\mu)}
\]
for all $f: S\to \erre$ with $\ip{f}{\mu}=0$, where the exponential
decay rate $C$ is the inverse spectral gap. Similar bounds hold for
other $L^p$ norms with $p \in [2,\infty)$. The purpose of this section
is to prove related bounds for $q_{s,t}f$ if $\ip{f}{\mu_t}=0$.

Let us start with a proposition, which can be seen as an analogue of
Proposition \ref{prop:B}, asserting that an exponential decay of order
$\gamma$ of $\ip{|q_{s,t}f|^{p/2}}{\mu_s}$ implies
exponential decay of the same order for $\ip{|q_{s,t}f|^p}{\mu_s}$.

\begin{prop}
  Let $p \geq 2$ and $\gamma \geq 0$, and assume that
  \begin{equation}     \label{eq:mara}
  \lambda_s \geq \frac{p}{4}\,A_s + \kappa \frac{p(p-1)}{4}\,B_s + 
  \frac{\gamma}{2}\, C_s,
  \qquad \forall s \in [0,t]
  \end{equation}
  for some $\kappa>0$. Then we have, for all $s\in[0,t]$ and
  $f:S\to\erre$,
  \begin{equation}    \label{eq:**}
    \ip{|q_{s,t}f|^2}{\mu_s} \leq e^{-\gamma(t-s)}
    \ip{|f|^2}{\mu_t} + \big(1 + \frac1{\kappa\gamma}\big)
    \big(1-e^{-\gamma(t-s)}\big) \ip{f}{\mu_t}^2
  \end{equation}
  and
  \begin{equation}
    \label{eq:***}
    \ip{|q_{s,t}f|^p}{\mu_s} \leq e^{-\gamma(t-s)} \Big(
    \ip{|f|^p}{\mu_t} + \big(\frac1\kappa+\gamma\big) \int_s^t e^{\gamma(t-r)}
    \ip{|q_{r,t}f|^{p/2}}{\mu_r}^2\,dr \Big)
  \end{equation}
\end{prop}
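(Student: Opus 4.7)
The plan is to mirror the derivation of Proposition \ref{prop:B} and add one extra ingredient---the global Poincar\'e inequality (\ref{eq:P3})---which is what produces the exponential decay.

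First, for $f\geq 0$, I would compute $-\tfrac{d}{dr}\ip{(q_{r,t}f)^p}{\mu_r}$ exactly as in the proof of Proposition \ref{prop:B}. Using the backward equation (\ref{eq:BWE}), the density identity (\ref{eq:timedep}), Stroock's inequality (\ref{eq:str}), and the basic estimate (\ref{eq:A}) applied to $(q_{r,t}f)^{p/2}$ in place of $f$, one reaches
\[
-\tfrac{d}{dr}\ip{(q_{r,t}f)^p}{\mu_r} \leq -\tfrac{p-1}{p}(4\lambda_r-pA_r)\,\EE_r\bigl((q_{r,t}f)^{p/2}\bigr) + 2(p-1)B_r^{1/2}\ip{(q_{r,t}f)^{p/2}}{\mu_r}\,\EE_r\bigl((q_{r,t}f)^{p/2}\bigr)^{1/2}.
\]
The new step is to split the dissipative coefficient. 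From (\ref{eq:mara}) one has $4\lambda_r-pA_r \geq \kappa p(p-1)B_r + 2\gamma C_r$, hence $\tfrac{p-1}{p}(4\lambda_r-pA_r) \geq \kappa(p-1)^2 B_r + \tfrac{2(p-1)\gamma}{p}C_r$. I would pit the first summand $\kappa(p-1)^2 B_r\,\EE_r(\cdot)$ against the cross term via $-ax+2b\sqrt{x}\leq b^2/a$ (with $a=\kappa(p-1)^2 B_r$, $b=(p-1)B_r^{1/2}\ip{(q_{r,t}f)^{p/2}}{\mu_r}$) to obtain $\tfrac{1}{\kappa}\ip{(q_{r,t}f)^{p/2}}{\mu_r}^2$. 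For the second summand, since $\tfrac{2(p-1)}{p}\geq 1$ for $p\geq 2$, the Poincar\'e inequality (\ref{eq:P3}) yields
\[
\tfrac{2(p-1)\gamma}{p}\,C_r\,\EE_r\bigl((q_{r,t}f)^{p/2}\bigr) \geq \gamma\bigl(\ip{(q_{r,t}f)^p}{\mu_r} - \ip{(q_{r,t}f)^{p/2}}{\mu_r}^2\bigr).
\]
Combining these two bounds produces the linear ODE inequality
\[
-\tfrac{d}{dr}\ip{(q_{r,t}f)^p}{\mu_r} \leq -\gamma\ip{(q_{r,t}f)^p}{\mu_r} + \bigl(\tfrac{1}{\kappa}+\gamma\bigr)\ip{(q_{r,t}f)^{p/2}}{\mu_r}^2.
\]

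Setting $u(r):=\ip{(q_{r,t}f)^p}{\mu_r}$, this reads $\tfrac{d}{dr}(e^{-\gamma r}u(r)) \geq -e^{-\gamma r}(\tfrac{1}{\kappa}+\gamma)\ip{(q_{r,t}f)^{p/2}}{\mu_r}^2$; integrating from $s$ to $t$ and using $e^{-\gamma(r-s)} = e^{-\gamma(t-s)}e^{\gamma(t-r)}$ gives (\ref{eq:***}) for $f\geq 0$, and the extension to signed $f$ follows from $|q_{s,t}f|\leq q_{s,t}|f|$ (Proposition \ref{prop:prelim}(ii) and linearity). For (\ref{eq:**}) I would redo the same computation at $p=2$: Stroock's inequality is trivial there, the derivation goes through for any signed $f$, and the crucial simplification is that the boundary quantity $|\ip{q_{r,t}f}{\mu_r}|^2 = \ip{f}{\mu_t}^2$ is constant in $r$ by the invariance property (iv) of Proposition \ref{prop:prelim}. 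The $r$-integral then evaluates explicitly to $\ip{f}{\mu_t}^2\,(e^{\gamma(t-s)}-1)/\gamma$, and after multiplying by $e^{-\gamma(t-s)}$ one recovers the factor $(1+\tfrac{1}{\kappa\gamma})(1-e^{-\gamma(t-s)})$ appearing in (\ref{eq:**}).

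The main obstacle is the coefficient bookkeeping in the splitting: one has to arrange the two pieces of $4\lambda_r - pA_r$ so that the cross-term absorption and the Poincar\'e-based decay combine cleanly into exactly $\tfrac{1}{\kappa}+\gamma$ in front of $\ip{(q_{r,t}f)^{p/2}}{\mu_r}^2$, which is precisely what the calibration of (\ref{eq:mara}) is designed to permit. A secondary point to verify is the elementary inequality $\tfrac{2(p-1)}{p}\geq 1$ for $p\geq 2$, which lets the Poincar\'e term contribute a clean coefficient $\gamma$ rather than a $p$-dependent one.
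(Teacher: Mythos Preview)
Your argument is essentially the paper's for (\ref{eq:**}) and for (\ref{eq:***}) with $f\ge 0$: the paper differentiates $e^{\gamma(t-s)}\ip{|q_{s,t}f|^p}{\mu_s}$ directly while you apply the integrating factor afterward, but the content is the same. The gap is in your extension of (\ref{eq:***}) to signed $f$. Applying the nonnegative case to $|f|$ and using $|q_{s,t}f|\le q_{s,t}|f|$ gives
\[
\ip{|q_{s,t}f|^p}{\mu_s}\ \le\ e^{-\gamma(t-s)}\Bigl(\ip{|f|^p}{\mu_t}+\bigl(\tfrac1\kappa+\gamma\bigr)\int_s^t e^{\gamma(t-r)}\ip{(q_{r,t}|f|)^{p/2}}{\mu_r}^2\,dr\Bigr),
\]
with $(q_{r,t}|f|)^{p/2}$ on the right, not $|q_{r,t}f|^{p/2}$. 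Since $q_{r,t}|f|\ge |q_{r,t}f|$, this is \emph{weaker} than (\ref{eq:***}) and does not imply it. The distinction is not cosmetic: Theorem~\ref{thm:main2} iterates (\ref{eq:***}) via an induction hypothesis on $\ip{|q_{r,t}f|^{p/2}}{\mu_r}$ for $f$ with $\ip{f}{\mu_t}=0$, and replacing $f$ by $|f|$ destroys the mean-zero condition that drives the decay.

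The paper handles signed $f$ by differentiating $\int |q_{s,t}f|^p\,d\mu_s$ directly. The chain rule produces the mixed form $\EE_s\bigl(|q_{s,t}f|^{p-1}\sgn(q_{s,t}f),\,q_{s,t}f\bigr)$, and the missing ingredient is the elementary pointwise inequality
\[
(\phi(x)-\phi(y))\bigl(|\phi(x)|^{p-1}\sgn\phi(x)-|\phi(y)|^{p-1}\sgn\phi(y)\bigr)\ \ge\ (|\phi(x)|-|\phi(y)|)\bigl(|\phi(x)|^{p-1}-|\phi(y)|^{p-1}\bigr),
\]
which (since the off-diagonal entries of $\L_s$ are nonnegative) yields $\EE_s\bigl(|q_{s,t}f|^{p-1}\sgn(q_{s,t}f),q_{s,t}f\bigr)\ge \EE_s\bigl(|q_{s,t}f|^{p-1},|q_{s,t}f|\bigr)$, after which Stroock's inequality applies to $|q_{s,t}f|$. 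Once this step is inserted, the rest of your splitting and integrating-factor argument goes through verbatim and delivers (\ref{eq:***}) with $|q_{r,t}f|^{p/2}$ on the right, as required.
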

\begin{proof}
  By (\ref{eq:BWE}), (\ref{eq:timedep}), and the definition of
  $\EE_t$, we obtain, similarly as above,
\begin{align}
  - \frac{\partial}{\partial s} e^{\gamma(t-s)} \int |q_{s,t}f|^2\,d\mu_s
  =& - 2e^{\gamma(t-s)} \lambda_s
         \,\EE_s(q_{s,t}f)\nonumber\\
       & - e^{\gamma(t-s)}
          \int H_s\cdot (q_{s,t}f)^2\,d\mu_s\label{eq:star}\\
       & + \gamma e^{\gamma(t-s)} \int (q_{s,t}f)^2\,d\mu_s.\nonumber
\end{align}
By the Poincar\'e inequality (\ref{eq:P3}), we have
\[
\int (q_{s,t}f)^2\,d\mu_s\ \leq\ C_s\cdot
\EE_s(q_{s,t}f) + \ip{q_{s,t}f}{\mu_s}^2.
\]
Moreover, by (\ref{eq:A}),
\[
-\int H_s\, (q_{s,t}f)^2\,d\mu_s \leq A_s
\EE_s(q_{s,t}f) + 2 B_s^{1/2}
|\ip{q_{s,t}f}{\mu_s}|\cdot
\EE_s(q_{s,t}f)^{1/2},
\]
hence, by (\ref{eq:mara}),
\begin{align*}
  -\frac{\partial}{\partial s} e^{\gamma(t-s)} \int |q_{s,t}f|^2\,d\mu_s
  \ \leq\ &- e^{\gamma(t-s)}\big(2\lambda_s - \gamma C_s
        -A_s\big) \EE_s(q_{s,t}f)\\
  \ \ & +2 e^{\gamma(t-s)} B_s^{1/2} |\ip{q_{s,t}f}{\mu_s}|\,
      \EE_s(q_{s,t}f)^{1/2}\\
  \ \ & +\gamma e^{\gamma(t-s)} \ip{q_{s,t}f}{\mu_s}^2\\
  \ \leq\ \frac{B_s}{2\lambda_s - \gamma C_s - A_s}
        &\; e^{\gamma(t-s)}
        \ip{q_{s,t}f}{\mu_s}^2\
   +\ \gamma e^{\gamma(t-s)} \ip{q_{s,t}f}{\mu_s}^2\\
  \ \leq\ \big(\frac1\kappa + \gamma\big) e^{\gamma(t-s)} \ip{f}{\mu_t}^2.
\end{align*}
Here we have used that $\ip{q_{s,t}f}{\mu_s}=\ip{f}{\mu_t}$ as in
Proposition \ref{prop:prelim}. We obtain (\ref{eq:**}) integrating the
previous inequality with respect to $s$.

Let us now prove (\ref{eq:***}): appealing again to (\ref{eq:BWE}) and
(\ref{eq:timedep}), we obtain, in analogy to the
derivation of (\ref{eq:star}),
\begin{align*}
  -\frac{\partial}{\partial s} e^{\gamma(t-s)} \int |q_{s,t}f|^p\,d\mu_s
  &= -p e^{\gamma(t-s)} \lambda_s
     \EE_s\big(|q_{s,t}f|^{p-1}\sgn(q_{s,t}f),q_{s,t}f\big)\\
  &\quad -(p-1) e^{\gamma(t-s)} \int H_s |q_{s,t}f|^p\,d\mu_s\\
  &\quad +\gamma e^{\gamma(t-s)} \int |q_{s,t}f|^p\,d\mu_s.
\end{align*}
Since for all $\phi:S \to \erre$ and $x$, $y \in S$,
\begin{multline*}
\big(\phi(x)-\phi(y)\big) \big( |\phi(x)|^{p-1} \sgn \phi(x)
- |\phi(y)|^{p-1} \sgn \phi(y) \big)\\
\geq \big(|\phi(x)|-|\phi(y)|\big) \big( |\phi(x)|^{p-1}
- |\phi(y)|^{p-1} \big),
\end{multline*}
taking into account that the off-diagonal terms of
$\mathcal{L}_s(x,y)$ are nonnegative, we obtain by
(\ref{eq:df}) that
\[
\EE_s\big(|q_{s,t}f|^{p-1}\sgn(q_{s,t}f),q_{s,t}f\big) \geq
\EE_s\big(|q_{s,t}f|^{p-1},|q_{s,t}f|\big),
\]
hence also, thanks to (\ref{eq:str}),
\[
\EE_s\big(|q_{s,t}f|^{p-1}\sgn(q_{s,t}f),q_{s,t}f\big) \geq
\frac{4(p-1)}{p^2} \EE_s(\big(|q_{s,t}f|^{p/2}\big).
\]
Proceeding now as in the proof of Proposition \ref{prop:B} we get
\begin{align*}
  -\frac{\partial}{\partial s} e^{\gamma(t-s)} \int |q_{s,t}f|^p\,d\mu_s
  &\leq - e^{\gamma(t-s)}\Big(\frac{4(p-1)}{p}\lambda_s - \gamma C_s
        -(p-1)A_s\Big) \EE_s\big(|q_{s,t}f|^{p/2}\big)\\
  &\quad +2(p-1) e^{\gamma(t-s)} B_s^{1/2}\,
                  \ip{|q_{s,t}f|^{p/2}}{\mu_s}\,
      \EE_s\big(|q_{s,t}f|^{p/2}\big)^{1/2}\\
  &\quad +\gamma e^{\gamma(t-s)} \ip{|q_{s,t}f|^{p/2}}{\mu_s}^2\\
  &\leq \frac{(p-1)^2 B_s}{4\lambda_s(p-1)/p - \gamma C_s -(p-1)A_s}
        e^{\gamma(t-s)} \ip{|q_{s,t}f|^{p/2}}{\mu_s}^2\\
  &\qquad + \gamma e^{\gamma(t-s)} \ip{|q_{s,t}f|^{p/2}}{\mu_s}^2\\
  &\leq \big(\frac1\kappa+\gamma\big) e^{\gamma(t-s)}
        \ip{|q_{s,t}f|^{p/2}}{\mu_s}^2.
\end{align*}
The last estimate holds by (\ref{eq:mara}), since $2(p-1)/p \geq 1$.
We obtain (\ref{eq:***}) integrating the previous inequality with
respect to $s$.
\end{proof}

As a consequence we obtain the following result.
\begin{thm}     \label{thm:main2}
  Let $t$, $\alpha$, $\beta \geq 0$. Then for all $f:S\to\erre$ such that
  $\ip{f}{\mu_t}=0$, we have:
  \begin{itemize}
  \item[(i)] If $\lambda_s \geq \frac 12\,A_s + \alpha C_s$ for all
    $s\in[0,t]$, then
  \[
  \|q_{s,t}f\|_{L^2(\mu_s)} \ \leq\ e^{-\alpha(t-s)}\,
  \|f\|_{L^2(\mu_t)}.
  \]
  \item[(ii)] If $p=2^n$ for some $n\in\mathbb{N}$ and
  \[
  \lambda_s\ \geq\ \frac{p}{4}\,A_s + \beta \frac{p-1}{4}\,B_s
  + \alpha \frac{p}{2}\, C_s
  \qquad \forall s \in [0,t],
  \]
  then
  \[
  \|q_{s,t}f\|_{L^p(\mu_s)}\ \leq\ 
  e^{-\alpha(t-s)} \sqrt{2+(\alpha\beta)^{-1}}
  \,\|f\|_{L^p(\mu_t)}.
  \]
  \end{itemize}
\end{thm}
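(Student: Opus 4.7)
For part (i), I would revisit the derivation of \eqref{eq:**} exploiting $\ip{f}{\mu_t}=0$. The invariance property (Proposition~\ref{prop:prelim}(iv)) gives $\ip{q_{s,t}f}{\mu_s}=\ip{f}{\mu_t}=0$, so every occurrence of $\ip{q_{s,t}f}{\mu_s}^{2}$ in that proof vanishes; the Cauchy--Schwarz absorption of the $B_s$-term becomes vacuous, and the parameter $\kappa$ may effectively be sent to $0^+$. With $\gamma=2\alpha$ the hypothesis \eqref{eq:mara} at $p=2$ collapses to exactly the stated $\lambda_s\ge\tfrac{1}{2} A_s+\alpha C_s$, and the argument gives $\|q_{s,t}f\|_{L^{2}(\mu_s)}^{2}\le e^{-2\alpha(t-s)}\|f\|_{L^{2}(\mu_t)}^{2}$.

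For part (ii), the case $p=2$ follows from (i) since $\sqrt{2+(\alpha\beta)^{-1}}\ge 1$. For $p=2^n$ with $n\ge 2$ I would avoid induction on $n$ and proceed directly, interpolating against the $L^2$ bound from (i). The first step is to sharpen the computation underlying \eqref{eq:***}: starting from the bound $-\tfrac{d}{ds}\int|q_{s,t}f|^{p}d\mu_s\le-\tfrac{(p-1)(4\lambda_s-pA_s)}{p}\EE_s(|q_{s,t}f|^{p/2})+2(p-1)B_s^{1/2}\bip{|q_{s,t}f|^{p/2}}{\mu_s}\EE_s(|q_{s,t}f|^{p/2})^{1/2}$, apply Poincar\'e \eqref{eq:P3} to a fraction $\theta$ of the energy term and combine the remaining $(1-\theta)$-multiple with the cross term via $-ax+2bx^{1/2}\le b^{2}/a$. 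Taking $\theta=\alpha p^{2}C_s/[(p-1)(4\lambda_s-pA_s)]$ (which lies in $[0,1]$ since $p\ge 2$) makes the resulting coefficient of $\int|q_{s,t}f|^{p}d\mu_s$ equal to $\alpha p$; the hypothesis $4\lambda_s-pA_s\ge\beta(p-1)B_s+2\alpha pC_s$ controls the residual coefficient by $p/\beta$, giving
\begin{equation*}
-\fr
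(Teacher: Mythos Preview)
Your treatment of part (i) is correct and coincides with the paper's: both note that $\ip{q_{s,t}f}{\mu_s}=\ip{f}{\mu_t}=0$ kills every term in the derivation of (\ref{eq:**}) that carries a factor $\ip{q_{s,t}f}{\mu_s}$, so one may let $\kappa\downarrow 0$ with $\gamma=2\alpha$, and the hypothesis (\ref{eq:mara}) collapses to $\lambda_s\ge\tfrac12 A_s+\alpha C_s$.

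For part (ii) you depart from the paper. The paper argues by induction on $n$ where $p=2^n$: with $\gamma=\alpha p$ and $\kappa=\beta/p$, the inductive hypothesis bounds $\ip{|q_{r,t}f|^{p/2}}{\mu_r}$, and substituting this into (\ref{eq:***}) and integrating yields (\ref{eq:XX}), from which the claim follows. Your differential inequality (splitting off a fraction $\theta$ of the energy for Poincar\'e and absorbing the remainder with the cross term) is essentially a re-derivation of the content of (\ref{eq:***}) with the same parameters; it still leaves you with the term $\ip{|q_{s,t}f|^{p/2}}{\mu_s}^2$, an $L^{p/2}$ quantity that must be controlled. You propose to ``interpolate against the $L^2$ bound from (i)'' rather than recurse, but your write-up is truncated precisely before this step is executed. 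The $L^2$ estimate alone does not dominate an $L^{p/2}$ norm; log-convex interpolation between $L^2$ and $L^p$ reintroduces $\|q_{s,t}f\|_{L^p}$ and turns the inequality into a nonlinear Bernoulli-type ODE for $V(s)=\|q_{s,t}f\|_{L^p}^p$. Such an ODE can indeed be integrated for each fixed $p$, but this is no more ``direct'' than the paper's induction, the constants one obtains need not equal the stated $\sqrt{2+(\alpha\beta)^{-1}}$, and nothing in the fragment you wrote indicates how the argument closes. As presented, part (ii) has a genuine gap at the decisive point.
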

\begin{proof}
  Since $\ip{q_{s,t}f}{\mu_s}=\ip{f}{\mu_t}=0$, assertion (i) follows
  from (\ref{eq:**}) in the limit $\kappa \downarrow 0$.

  \noindent (ii) We shall prove by induction on $n$ that if
  \[
  \lambda_s \geq \frac{p}{4} A_s + \kappa \frac{p(p-1)}{4} B_s
  + \frac{\gamma}{2} C_s \qquad \forall s \in [0,t]
  \]
  for some $\kappa$, $\gamma \geq 0$, then
  \begin{equation}
    \label{eq:XX}
    \|q_{s,t}f\|_{L^p(\mu_s)} \leq e^{-\gamma(t-s)/p}
    \big(2+\frac{1}{\kappa\gamma}\big)^{1/2-1/p} \|f\|_{L^p(\mu_t)}.
  \end{equation}
  This implies (ii) by choosing $\gamma=\alpha p$ and
  $\kappa=\beta/p$.  For $n=1$, i.e. $p=2$, (\ref{eq:XX}) holds by
  (i). Now suppose (\ref{eq:XX}) holds for $n-1$. Then for $r \in [0,t]$,
  \[
  \ip{|q_{r,t}f|^{p/2}}{\mu_r} \leq e^{-\gamma(t-r)}
  \big( 2+\frac{1}{\kappa\gamma} \big)^{p/4-1}
  \ip{|f|^{p/2}}{\mu_t},
  \]
  hence (\ref{eq:***}) yields
  \begin{align*}
  \ip{|q_{s,t}f|^p}{\mu_s} &\leq e^{-\gamma(t-s)} \Big( \ip{|f|^p}{\mu_t}
  + (\kappa^{-1} + \gamma) \int_s^t e^{\gamma(t-r)}
  \ip{|q_{r,t}f|^{p/2}}{\mu_r}^2\,dr \Big)\\
  &\leq e^{-\gamma(t-s)} \ip{|f|^p}{\mu_t} \Big(1 + 
   \frac{\kappa^{-1}+\gamma}{\gamma}(1-e^{-\gamma(t-s)})
   (2+\kappa^{-1}\gamma^{-1})^{p/2-2} \Big)\\
  &\leq e^{-\gamma(t-s)} \big(2+(\kappa\gamma)^{-1}\big)^{p/2-1}
  \ip{|f|^p}{\mu_t},
  \end{align*}
  and thus (\ref{eq:XX}).
\end{proof}
\begin{rmk}
  For general $p \geq 2$, exponential decay of the $L^p(\mu_s)$ norm
  of $q_{s,t}f$ with $\ip{f}{\mu_t}=0$ follows from the above result by
  the Riesz-Thorin interpolation theorem, in analogy to situations
  already encountered before.
\end{rmk}
\begin{exc}
  In the situation of Example (iii) above, provided the assumption on
  $H_t$ made above is satisfied, one has to choose $\lambda_s$ of
  order $n^2$ in order to guarantee exponential decay of
  $\|q_{s,t}f\|_{L^p(\mu_s)}$.
\end{exc}

\section{$L^p$ estimates on invariant subsets}     \label{sec:loc}
The aim of this section is to show that one can still obtain $L^p$
estimates for the transitions operators $q_{s,t}$ on a subset
$\tilde{S} \subseteq S$ that is invariant w.r.t. the underlying
Markovian dynamics, i.e.
\begin{equation}     \label{eq:invtilde}
\L_t(x,y)=0 \qquad \forall (x,y) \in \tilde{S} \times \tilde{S}^c.
\end{equation}
Instead of Poincar\'e inequalities on $S$, we then only have to assume
corresponding inequalities on the subset $\tilde{S}$. The results
stated below are then a consequence of the global bounds derive above,
and they are relevant for the applications studied in \cite{EM08a}.

Let us define, for $t \geq 0$, the conditional measure
\[
\tilde{\mu}_t(x)=\mu_t(x|\tilde{S}) :=
\frac{\mu_t(\{x\}\cap\tilde{S})}{\mu(\tilde{S})},
\]
and set $\tilde{H}_t:=H_t-\ip{H_t}{\tilde{\mu}_t}$. Note that
$\ip{\tilde{H}_t}{\tilde{\mu}_t}=0$, and
\[
\tilde{\mu}_t\ \propto\ \mu_t\ \propto\ \exp\left( -\int_0^t
H_s\,ds\right)\,\mu_0 \ \propto\ \exp\left( {-\int_0^t
\tilde{H}_s\,ds}\right)\,\mu_0 \qquad \text{on }\tilde{S},
\]
where ``$\propto$'' means that the functions agree up to a
multiplicative constant. Thus the conditional measure $\tilde{\mu}_t$
can be represented in the same way as $\mu_t$ with $H_t$ replaced by
$\tilde{H}_t$. Assumption (\ref{eq:invtilde}) implies that $\L_t$ also
satisfies the detailed balance condition with respect to
$\tilde{\mu}_t$:
\begin{equation}     \label{eq:dbtilde}
\tilde{\mu}_t(x)\L_t(x,y) = \tilde{\mu}_t(y)\L_t(y,x)
\qquad \forall t \geq 0, \quad x,\,y \in S.
\end{equation}
Let
\[
\tilde{\EE}_t(f) = - \int f\,\L_tf\,d\tilde{\mu}_t
= \frac12 \sum_{x,y \in S} (f(y)-f(x))^2 \tilde{\mu}_t \L_t(x,y)
\]
denote the corresponding Dirichlet form on $L^2(S,\tilde{\mu}_t)$.
Note that, by (\ref{eq:invtilde}), only the summands for $x$, $y \in
\tilde{S}$ contribute to the sum.

As a consequence of Theorem \ref{thm:mainp}~(i) and Theorem
\ref{thm:main2} we obtain:
\begin{coroll}     \label{cor:loc}
  Assume that (\ref{eq:invtilde}) holds and that $\L_t$ satisfies the
  inequalities
  \begin{align*}
  \operatorname{Var}_{\tilde{\mu}_t}(f) &\leq \tilde{C}_t
  \tilde{\EE}_t(f),\\
  -\int \tilde{H}_t (f-\ip{f}{\tilde{\mu}_t})^2\,d\tilde{\mu}_t
    &\leq \tilde{A}_t \tilde{\EE}_t(f)\\
  \left|\int \tilde{H}_t  f\,d\tilde{\mu}_t\right|^2 &\leq \tilde{B}_t
  \tilde{\EE}_t(f)
  \end{align*}
  for all $f:S \to \erre$. Then the following assertions hold true for
  all $t$, $\alpha$, $\beta \geq 0$:
  \begin{itemize}
  \item[(i)] Let $p \geq 2$. If
  \[
  \lambda_s \ \geq\ \frac{p}{4} \tilde{A}_s + \frac{p(p+3)}{4} t \tilde{B}_s
  \]
  for all $s\in [0,t]$, then
  \[
  \|q_{s,t}f\|_{L^p(\tilde\mu_s)}\ \leq\  2^{1/4}\,
  \frac{\mu_t(\tilde{S})}{\mu_s(\tilde{S})}\, \|f\|_{L^p(\tilde\mu_t)}
  \]
  for all $f:S \to \erre$ and $s\in [0,t]$.
  \item[(ii)] If
  \[
  \lambda_s\ \geq \ \frac12 \tilde{A}_s + \alpha \tilde{C}_s
  \]
  for all $s\in [0,t]$, then
  \[
  \|q_{s,t}f\|_{L^2(\tilde\mu_s)}\ \leq \ e^{-\alpha(t-s)}
  \,\frac{\mu_t(\tilde{S})}{\mu_s(\tilde{S})}\, \|f\|_{L^2(\tilde\mu_t)}
  \]
  for all $f:S \to \erre$ with $\ip{f}{\tilde{\mu}_t}=0$.
  \item[(iii)] If $p=2^n$ and
  \[
  \lambda_s \ \geq\ \frac{p}{4} \tilde{A}_s
    + \beta \frac{p-1}{4} \tilde{B}_s
    + \alpha\frac{p}{2} \tilde{C}_s
  \]
  for all $s\in [0,t]$, then
  \[
  \|q_{s,t}f\|_{L^p(\tilde\mu_s)}\ \leq\
  e^{-\alpha(t-s)}\,
  (2+1/\alpha\beta)^{1/2}\,\frac{\mu_t(\tilde{S})}{\mu_s(\tilde{S})}
  \,\|f\|_{L^p(\tilde\mu_t)}
  \]
  for all $f:S \to \erre$ with $\ip{f}{\tilde{\mu}_t}=0$.
  \end{itemize}
\end{coroll}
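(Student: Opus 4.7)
The plan is to reduce Corollary \ref{cor:loc} to Theorems \ref{thm:mainp} and \ref{thm:main2} by restricting all quantities to $\tilde{S}$. Condition (\ref{eq:invtilde}) ensures, via the Feynman-Kac formula (\ref{eq:FK}), that the underlying Markov process never leaves $\tilde{S}$ once started there. Consequently, for $x\in\tilde{S}$ the value $(q_{s,t}f)(x)$ depends only on $f|_{\tilde{S}}$, and the kernel $q_{s,t}(x,y)$, $x,y\in\tilde{S}$, satisfies the backward equation (\ref{eq:BWE}) with $\L_s$ read as its restriction $\L_s|_{\tilde{S}}$.

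The main step is to identify $q_{s,t}|_{\tilde{S}\times\tilde{S}}$, up to a deterministic scalar, with the propagator built from the reduced data $(\L_s|_{\tilde{S}},\tilde{\mu}_s,\tilde{H}_s)$. Differentiating the identity $\mu_t(\tilde{S})=\sum_{x\in\tilde{S}}e^{-\int_0^t H_r(x)\,dr}\mu_0(x)$ and using (\ref{eq:centered})-style cancellation gives $\partial_t\log\mu_t(\tilde{S})=-\ip{H_t}{\tilde{\mu}_t}$, hence
\[
\rho(s,t) := \frac{\mu_s(\tilde{S})}{\mu_t(\tilde{S})} = \exp\Big(\int_s^t \ip{H_r}{\tilde{\mu}_r}\,dr\Big).
\]
Setting $\tilde{q}_{s,t}:=\rho(s,t)\,q_{s,t}$ on $\tilde{S}\times\tilde{S}$ and using $H_s=\tilde{H}_s+\ip{H_s}{\tilde{\mu}_s}$ on $\tilde{S}$, a direct calculation yields
\[
-\partial_s \tilde{q}_{s,t} = \lambda_s \L_s|_{\tilde{S}}\tilde{q}_{s,t} - \tilde{H}_s \tilde{q}_{s,t}, \qquad \tilde{q}_{t,t}=\mathrm{Id},
\]
so $\tilde{q}_{s,t}$ is precisely the Feynman-Kac propagator of the form analyzed in Sections \ref{sec:glob} and \ref{sec:improved}, associated to $(\L_t|_{\tilde{S}},\tilde{\mu}_t,\tilde{H}_t)$. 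Detailed balance w.r.t.\ $\tilde{\mu}_t$ is exactly (\ref{eq:dbtilde}), and the relevant Poincar\'e-type inequalities are by hypothesis those with constants $\tilde{A}_t$, $\tilde{B}_t$, $\tilde{C}_t$.

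With this identification, I would apply Theorem \ref{thm:mainp}(i) and Theorem \ref{thm:main2}(i), (ii) directly to $\tilde{q}_{s,t}$: the hypotheses on $\lambda_s$ in the three parts of the corollary are precisely the tilded versions required by those theorems, and the condition $\ip{f}{\tilde{\mu}_t}=0$ in parts (ii) and (iii) matches the mean-zero assumption there. Each theorem yields the corresponding bound for $\|\tilde{q}_{s,t}f\|_{L^p(\tilde{\mu}_s)}$, and multiplying through by $\rho(s,t)^{-1}=\mu_t(\tilde{S})/\mu_s(\tilde{S})$ produces the three stated estimates for $\|q_{s,t}f\|_{L^p(\tilde{\mu}_s)}$. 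The only genuine obstacle is the intertwining step in the second paragraph: one must verify carefully that multiplying $q_{s,t}$ by the scalar $\rho(s,t)$ exactly absorbs the additive constant $\ip{H_s}{\tilde{\mu}_s}$ appearing in the potential, thereby converting a propagator adapted to the reference measure $\mu_t$ into one adapted to $\tilde{\mu}_t$.
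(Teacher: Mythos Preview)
Your proposal is correct and follows essentially the same approach as the paper. The only cosmetic difference is the order of presentation: you define $\tilde{q}_{s,t}:=\rho(s,t)\,q_{s,t}$ and then verify it satisfies the backward equation with potential $\tilde{H}_s$, whereas the paper defines $\tilde{q}_{s,t}$ directly via that backward equation and then derives the scalar relation $q_{s,t}=\frac{\mu_t(\tilde S)}{\mu_s(\tilde S)}\,\tilde{q}_{s,t}$; both routes rest on the same identity $H_t=\tilde{H}_t-\frac{d}{dt}\log\mu_t(\tilde S)$ on $\tilde S$ and conclude by invoking Theorems~\ref{thm:mainp}(i) and~\ref{thm:main2}.
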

\begin{proof}
  Let $\tilde{q}_{s,t}$ denote the transition operators defined via
  the backward equation (\ref{eq:BWE}) with $\tilde{H}_t$ replacing
  $H_t$. By the detailed balance condition (\ref{eq:dbtilde}) and the
  assumptions, the operators $\tilde{q}_{s,t}$ satisfy the $L^p$
  bounds from the previous sections with $\tilde{H}_t$ replacing
  $H_t$, under the conditions on $\lambda_s$, $\tilde{A}_s$,
  $\tilde{B}_s$ and $\tilde{C}_s$ stated above. Now note that by a
  simple calculation based on (\ref{eq:timedep}),
  \[
  H_t(x) = \tilde{H}_t(x) + \ip{H_t}{\tilde\mu_t} = 
  \tilde{H}_t(x) + h_t(\tilde{S}) \qquad \forall x \in \tilde{S},
  \]
  where
  \[
  h_t(\tilde{S}) = -\frac{d}{dt} \log \mu_t(\tilde{S}).
  \]
  Hence for any function $f:S \to \erre$ and for all $x\in\tilde{S}$,
  we have
  \[
  q_{s,t}f(x)\ =\ e^{-\int_s^t h_r(\tilde{S})\,dr} \tilde{q}_{s,t}f(x)
  \ =\ \frac{\mu_t(\tilde{S})}{\mu_s(\tilde{S})}
  \,\tilde{q}_{s,t}f(x).
  \]
  The assertions now follow applying Theorem \ref{thm:mainp}~(i) and
  Theorem \ref{thm:main2} to $\tilde{q}_{s,t}f$.
\end{proof}
In particular, it is worth pointing out that sufficiently strong
mixing properties on the component can make up for an increase of the
weight of the component as long as one is only looking for bounds for
$q_{s,t}f$ on functions $f$ such that $\ip{f}{\tilde{\mu}_t}=0$.

\section{Logarithmic Sobolev inequalities and $L^p \to L^q$
estimates}     \label{sec:LS}
We finally obtain an $L^p \to L^q$ estimate for $q_{s,t}$ from
logarithmic Sobolev inequalities for the Dirichlet forms $\EE_t$, by
an adaptation of the classical argument that a log Sobolev inequality
implies hypercontractivity (see e.g.  \cite{Gross-LNM} or
\cite[{\S}6.1.14]{DeuStr}). This generalizes well-known results for
time-homogeneous Markov chains, for which we refer to e.g.
\cite{DSC-logsob,SC}, to the time-inhomogeneous setting.
\begin{thm}     \label{thm:lsi}
  Suppose that each of the measures $\mu_t$, $t \geq 0$, satisfies a
  logarithmic Sobolev inequality with constant $C^{LS}_t>0$, i.e.
  \begin{equation}     \label{eq:LSI}
  \int f^2 \log \left(\frac{f}{\|f\|_{L^2(\mu_t)}}
  \right)^2\,d\mu_t\
      \leq\ C_t^{LS}\cdot \EE_t(f)
  \end{equation}
  for all $t \geq 0$ and $f:S \to \erre$. Then, for $1 < p \leq q <
  \infty$, one has
  \[
  \|q_{s,t}f\|_{L^q(\mu_s)}\ \leq\ \exp\left({\int_s^t \max
  H_r^-\,dr}\right)\,
  \|f\|_{L^p(\mu_t)}
  \]
  for all $f:S \to \erre$ and $0 \leq s \leq t$ such that
  \[
  \int_s^t \frac{\lambda_r}{C_r^{LS}}\, dr\ \ge\ \frac 14 \log \frac{q-1}{p-1}\  .
  \]
\end{thm}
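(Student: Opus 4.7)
My plan is to adapt Gross's classical hypercontractivity argument to the present non-Markovian setting with time-varying reference measure. By the positivity-preserving property (Proposition~\ref{prop:prelim}~(ii)) and the pointwise inequality $|q_{s,t}f|\le q_{s,t}|f|$, it suffices to treat $f\ge 0$ (and after a standard $f+\varepsilon$ regularization to avoid $\log 0$). Fix $0\le s\le t$ and let $r\mapsto p(r)$ be a $C^1$ decreasing function on $[s,t]$, to be chosen, with $p(s)=q$ and $p(t)=p$. Writing $u_r=q_{r,t}f$ and $N(r)=\int u_r^{p(r)}\,d\mu_r$, I study the moving norm
\[
\Phi(r)\ :=\ \log\|u_r\|_{L^{p(r)}(\mu_r)}\ =\ \frac{\log N(r)}{p(r)}.
\]
The target is $\Phi(s)\le\Phi(t)+\int_s^t\max_x H_r^-(x)\,dr$, which is equivalent to the theorem.

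Differentiating and using the backward equation~(\ref{eq:BWE}) together with $\partial_r\mu_r=-H_r\mu_r$ (from~(\ref{eq:timedep})), a direct calculation gives
\[
N'(r)\ =\ p\lambda_r\,\EE_r(u_r^{p-1},u_r)\ +\ (p-1)\int H_r u_r^{p}\,d\mu_r\ +\ p'\int u_r^{p}\log u_r\,d\mu_r,
\]
with $p=p(r)$ throughout. The key trick is to convert the last integral via the entropy identity $p\int u^p\log u\,d\mu_r=\mathrm{Ent}_{\mu_r}(u^p)+N\log N$ \emph{before} forming $\Phi'=N'/(pN)-(p'/p^2)\log N$: the two $N\log N$ contributions cancel exactly, yielding
\[
p^2 N\,\Phi'(r)\ =\ p'\,\mathrm{Ent}_{\mu_r}(u_r^{p})\ +\ p^2\lambda_r\,\EE_r(u_r^{p-1},u_r)\ +\ p(p-1)\int H_r u_r^{p}\,d\mu_r.
\]

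Next I apply the Stroock--Varopoulos inequality (\ref{eq:str}), giving $p^2\lambda_r\,\EE_r(u^{p-1},u)\ge 4\lambda_r(p-1)\,\EE_r(u^{p/2})$, and the log-Sobolev inequality~(\ref{eq:LSI}) with test function $u_r^{p/2}$, giving $\mathrm{Ent}_{\mu_r}(u_r^{p})\le C_r^{LS}\,\EE_r(u_r^{p/2})$. Multiplying the LSI by $p'\le 0$ \emph{reverses} its sign, so that the two Dirichlet contributions combine as $[4\lambda_r(p-1)+p'C_r^{LS}]\,\EE_r(u^{p/2})$. The decisive step is to choose $p(\cdot)$ so this bracket is non-negative, i.e.
\[
-\frac{p'(r)}{p(r)-1}\ \le\ \frac{4\lambda_r}{C_r^{LS}} \qquad\text{on }[s,t];
\]
integrating against the boundary conditions gives $\int_s^t (-p'/(p-1))\,dr=\log((q-1)/(p-1))$, so such a profile exists precisely under the hypothesis $\int_s^t\lambda_r/C_r^{LS}\,dr\ge\tfrac14\log((q-1)/(p-1))$.

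Dropping the now non-negative Dirichlet-form term and estimating the $H_r$-integral by $-(p-1)\max_x H_r^-(x)\cdot N$ yields $\Phi'(r)\ge -((p-1)/p)\max_x H_r^-(x)\ge -\max_x H_r^-(x)$; integrating from $s$ to $t$ produces the stated bound. The main obstacle I anticipate is the bookkeeping that achieves the \emph{exact} cancellation of the $N\log N$ terms in the entropy identity---only this cancellation permits the \emph{pointwise} LSI to be invoked in place of some global inequality---together with the secondary subtlety that the LSI enters with the opposite sign because $p'\le 0$, so the integral constraint on $p(\cdot)$ must be matched to the hypothesis on $\int_s^t\lambda_r/C_r^{LS}\,dr$ with care.
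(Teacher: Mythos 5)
Your argument is correct and is essentially the paper's proof: both run Gross's hypercontractivity scheme with a time-dependent exponent $p(r)$, use the Stroock--Varopoulos inequality (\ref{eq:str}) and the log-Sobolev inequality applied to $u_r^{p(r)/2}$, and choose $p(\cdot)$ so that the combined Dirichlet-form coefficient is nonnegative, which is exactly the integral condition $\int_s^t \lambda_r/C_r^{LS}\,dr \ge \tfrac14\log\tfrac{q-1}{p-1}$. The only (cosmetic) differences are that the paper absorbs the potential into the renormalized operator $\bar q_{s,t}=e^{-\int_s^t\max H_r^-\,dr}q_{s,t}$ instead of bounding $\int H_r u_r^{p}\,d\mu_r$ directly, and it takes the extremal profile $p_s=1+(p-1)\exp(4\int_s^t\lambda_r/C_r^{LS}\,dr)$ together with monotonicity of $L^r(\mu_s)$ norms rather than matching $p(s)=q$ exactly as you do.
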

\begin{proof}
Let us set
\[
\bar{q}_{s,t}f(x) := e^{-\int_s^t \max_{x \in S} H_r^-(x)\,dr} q_{s,t}f(x),
\qquad 0 \leq s \leq t,
\]
which satisfies the backward equation
\[
-\frac{\partial}{\partial s} \bar{q}_{s,t}f = \lambda_s \L_s
\bar{q}_{s,t}f - (H_s + \max H_s^-) \bar{q}_{s,t}f.
\]
Let $p:[0,t] \to ]1,+\infty[$ be a continuously differentiable
function. By computations similar to those carried out in the proof of
Proposition \ref{prop:B}, we obtain, noting that $H_s+\max H_s^- \geq 0$,
\begin{align*}
-{p_s} \|\bar{q}_{s,t}f\|^{p_s-1}_{L^{p_s}(\mu_s)}
   \frac{\partial}{\partial s} \|\bar{q}_{s,t}f\|_{L^{p_s}(\mu_s)}
   \ =&\ -\frac{\partial}{\partial s} \int (\bar{q}_{s,t}f)^{p_s}\,d\mu_s\\
= &\ -p_s\lambda_s\cdot  \EE_s\big((\bar{q}_{s,t}f)^{p_s-1},%
                   \bar{q}_{s,t}f\big)\\
 &\ -(p_s-1) \int (H_s+\max H_s^-) (\bar{q}_{s,t}f)^{p_s}\,d\mu_s\\
 &\ -p_s' \int (\bar{q}_{s,t}f)^{p_s} \log (\bar{q}_{s,t}f)\,d\mu_s\\
\leq\ -4\,&\frac{p_s-1}{p_s}\, \lambda_s\,
\EE_s\big((\bar{q}_{s,t}f)^{p_s/2}\big)
      -\frac{p_s'}{p_s} \int (\bar{q}_{s,t}f)^{p_s} \log\, (\bar{q}_{s,t}f)^{p_s}\,d\mu_s
\end{align*}
for all $f:S\to \erre_+$, where $p'_s:=dp_s/ds$.  Choosing
\[
p_s=1+(p-1)\, \exp \left({4\int_s^t \lambda_r/C_r^{LS}\,dr}\right),
\]
we have $p'_s=-4(p_s-1)\,\lambda_s/C_s^{LS}$, hence
the log Sobolev inequality (\ref{eq:LSI}) implies
\[
-\frac{\partial}{\partial s} \|\bar{q}_{s,t}f\|_{L^{p_s}(\mu_s)}
\leq 0
\]
for all $s \in ]0,t[$. Therefore we can conclude
\[
\|q_{s,t}f\|_{L^{p_s}(\mu_s)} = e^{\int_s^t \max H_r^-\,dr}
   \|\bar{q}_{s,t}f\|_{L^{p_s}(\mu_s)}
\leq e^{\int_s^t \max H_r^-\,dr}
   \|f\|_{L^{p}(\mu_t)}
\]
for all $s \in [0,t]$.
\end{proof}


\section*{Appendix}
In this appendix we prove bounds for the constants $A_t$, $B_t$, $C_t$
in the situation of Example (iii), for a fixed $t \geq
0$. Let us briefly recall the setup: we have $S=\{0,1,\ldots,n\}$,
$\mu_0$ is the uniform distribution on $S$, $\mu_t(i)=\mu_0(i)$ for
all $1 \leq 1 \leq n-1$, and $\L_t$ is defined by (\ref{eq:RWM}).
Denoting the derivative of $\mu_t$ with respect to time by $\mu'_t$,
we have $\mu'_t(i)=0$ for all $1 \leq 1 \leq n-1$ and
$\mu'_t(n)=-\mu'_t(0)$. We are going to assume, without loss of
generality, that $\mu'_t(0) \geq 0$. Then we have
\begin{equation}     \label{eq:hti}
-H_t(i) = \frac{\mu'_t(i)}{\mu_t(i)} =
\begin{cases}
  0, & 1 \leq i \leq n-1,\\[4pt]
\ds  \frac{\mu'_t(0)}{\mu_t(0)} \geq 0, & i=0,\\[12pt]
\ds  \frac{\mu'_t(n)}{\mu_t(n)} \leq 0, & i=n.
\end{cases}
\end{equation}
In this situation we can prove the following estimates for $n \in
\mathbb{N}$.
\begin{lemma}
  Let $A_t$, $B_t$ and $C_t$ be the constants defined in
  (\ref{eq:P3})-(\ref{eq:P2}). Then one has
  \begin{align*}
    A_t &\leq -4 H_t(0) (n+1)\\
    B_t &\leq 4\big( H_t(0)^2 + H_t(n)^2 \big) (n+1)\\
    \frac{(n-4)^4}{48(n+1)^2} \leq C_t &\leq n \, \max
    \left( \frac{n+1}{2},2 \right) \qquad \forall n \geq 4.
  \end{align*}
\end{lemma}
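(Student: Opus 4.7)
The proof handles the three bounds on $A_t$, $B_t$, and $C_t$ separately, exploiting the very explicit form of $-H_t$ from (\ref{eq:hti}) and of the Dirichlet form. Two preliminary observations do most of the work. First, since $\mu_t$ has mass $1/(n+1)$ on every interior point and total mass $1$, one has the identity $\mu_t(0)+\mu_t(n)=2/(n+1)$, so both endpoint masses are bounded by $2/(n+1)$. Second, the Dirichlet form evaluates explicitly to
\[
\EE_t(f) = \frac{1}{2(n+1)}\sum_{i=0}^{n-2}(f(i+1)-f(i))^2 + \frac{\mu_t(n)}{2}(f(n)-f(n-1))^2,
\]
so the potentially small coefficient $\mu_t(n)/2$ on the last edge is the only delicate term throughout.

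For $A_t$ and $B_t$ the strategy is parallel. Using (\ref{eq:hti}) and (after centering $f$) the mean-zero condition, one computes $\int(-H_t)f^2\,d\mu_t = \mu'_t(0)(f(0)^2-f(n)^2) \le \mu'_t(0)f(0)^2$ and $\int H_t f\,d\mu_t = \mu'_t(0)(f(n)-f(0))$. It therefore suffices to bound $\mu_t(0)f(0)^2$ by $4(n+1)\EE_t(f)$ for $A_t$, and $\mu'_t(0)^2(f(0)-f(n))^2$ by $4(H_t(0)^2+H_t(n)^2)(n+1)\EE_t(f)$ for $B_t$. Both come from the weighted Cauchy--Schwarz telescoping
\[
(f(a)-f(b))^2 \le 2\Big(\sum_{j=a}^{b-1}\tfrac{1}{2\ell_j}\Big)\EE_t(f),\qquad \ell_j:=\L_t(j,j+1)\mu_t(j),
\]
where $1/(2\ell_j)=n+1$ for $j\le n-2$ and $1/\mu_t(n)$ for $j=n-1$. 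The potentially large last reciprocal is absorbed either by the factor $\mu_t(n)$ coming from Jensen's inequality $f(0)^2 \le \sum_i \mu_t(i)(f(0)-f(i))^2$ (which is available because $f$ has mean zero) in the $A_t$ case, or by the identity $\mu'_t(0)^2/\mu_t(n)=H_t(n)^2\mu_t(n)$ in the $B_t$ case. Combined with the preliminary bound $\mu_t(0),\mu_t(n)\le 2/(n+1)$, elementary algebra yields the stated inequalities.

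For $C_t$, the upper bound is a Diaconis--Stroock canonical paths estimate $C_t\le\max_j R_j/\ell_j$, where $R_j=\sum_{x\le j<y}(y-x)\mu_t(x)\mu_t(y)$ is the load on edge $j$ along the straight nearest-neighbour paths. For interior edges $R_j\le n\cdot\mu_t(\{0,\dots,j\})(1-\mu_t(\{0,\dots,j\}))\le n/4$, giving $R_j/\ell_j\le n(n+1)/2$; for the edge $(n-1,n)$, the factor $\mu_t(n)$ inside $R_{n-1}$ cancels the small $\ell_{n-1}=\mu_t(n)/2$, producing the complementary $O(n)$ term visible in $n\max((n+1)/2,2)$. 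The lower bound is obtained by plugging a quadratic test function into the Rayleigh quotient $\var_{\mu_t}(f)/\EE_t(f)$, for instance $f(i)=(i-m)^2\mathbf{1}_{\{2\le i\le n-2\}}$ with $m$ chosen to center $f$ under $\mu_t$: supporting $f$ strictly inside the interior kills the degenerate final edge in $\EE_t(f)$, variance comes out at order $(n-4)^4/n$, Dirichlet energy at order $(n-4)^2$, and the ratio yields exactly $(n-4)^4/(48(n+1)^2)$ after bookkeeping. The main obstacle is precisely this bookkeeping in the $C_t$ estimates, where one has to propagate the explicit $n$-dependence through both the centring and the boundary truncation; the $A_t$ and $B_t$ arguments, by contrast, follow the same template and reduce to routine algebra once the weighted telescoping is set up.
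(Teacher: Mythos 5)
Your treatment of $A_t$, $B_t$ and of the \emph{upper} bound on $C_t$ follows essentially the paper's route: the identity $-H_t(x)\mu_t(x)=\mu_t'(x)$ at the two boundary points, the weighted Cauchy--Schwarz telescoping along the path with weights $1/a_t(i)$ (the paper's (\ref{eq:b1})), and the observation that the degenerate last edge is compensated either by the Jensen weight $\mu_t(n)$ or by $\mu_t'(0)^2/\mu_t(n)=H_t(n)^2\mu_t(n)$ — this is exactly the mechanism in the paper, and your canonical-path bound for $C_t$ is the paper's computation in different notation. One small inaccuracy: your closed formula for $\EE_t(f)$ presumes $\mu_t(0)\ge \frac1{n+1}\ge\mu_t(n)$, which does not follow from the normalization $\mu_t'(0)\ge 0$ (that fixes the sign of the derivative, not the ordering of the masses); the boundary edge weights are $\frac12\min\bigl(\mu_t(0),\frac{1}{n+1}\bigr)$ and $\frac12\min\bigl(\frac1{n+1},\mu_t(n)\bigr)$, and one must keep the minima, as in (\ref{eq:b23}). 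This does not affect the outcome.

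The genuine gap is in the \emph{lower} bound for $C_t$. The test function $f(i)=(i-m)^2\mathbf{1}_{\{2\le i\le n-2\}}$ cannot certify a bound of order $n^2$. A quadratic vanishes at only one point, so truncating it by zero forces a jump of size at least of order $(n-4)^2$ across one of the edges $(1,2)$ or $(n-2,n-1)$; these edges carry the full weight $\frac1{2(n+1)}$, so this single term already makes $\EE_t(f)$ of order $n^3$, not the order $(n-4)^2$ you assert. Since $\var_{\mu_t}(f)\le(\max f)^2=O(n^4)$, the Rayleigh quotient of this $f$ is $O(n)$, strictly short of the target $\frac{(n-4)^4}{48(n+1)^2}\sim n^2/48$. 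The bookkeeping is also internally inconsistent: a variance of order $(n-4)^4/n$ divided by an energy of order $(n-4)^2$ gives order $n$, not the claimed order-$n^2$ constant. The paper avoids both problems with the piecewise-linear choice $f(0)=f(1)=1$, $f(i)=i$ for $1\le i\le n-1$, $f(n-1)=f(n)=n-1$: flattening at the endpoints makes the two boundary edges contribute nothing, so $\EE_t(f)\le\frac12$ uniformly in $n$, while $\var_{\mu_t}(f)$ is of order $n^2$, and the quotient gives the stated constant. If you insist on a quadratic, extend it as a constant (not by zero) outside $\{2,\dots,n-2\}$ and redo the variance, which then comes out of order $n^4$ against an energy of order $n^2$.
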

\begin{proof}
  To derive the upper bound for $A_t$, we observe that by
  (\ref{eq:df}) and (\ref{eq:db}) we have
  \begin{equation}    \label{eq:etf}
    \EE_t(f) = \sum_{i=0}^{n-1} (f(i+1)-f(i))^2 a_t(i)
  \end{equation}
  for all $f: S \to \erre$ and $t \geq 0$, where
\[
a_t(i) = \mu_t(i) \L_t(i,i+1) = \frac12 \min \big(
\mu_t(i),\mu_t(i+1) \big),
\]
and, by (\ref{eq:hti}),
\begin{equation}     \label{eq:lhs}
  - \int H_t (f-\ip{f}{\mu_t})^2\,d\mu_t \leq
  -H_t(0) (f(0)-\ip{f}{\mu_t})^2 \mu_t(0).
\end{equation}
Moreover, by (\ref{eq:etf}), we have
\begin{equation}     \label{eq:b1}
  \begin{split}
  (f(0)-\ip{f}{\mu_t})^2 &= \Big( \sum_{k=0}^n (f(k)-f(0)) \mu_t(k) \Big)^2\\
  &= \Big( \sum_{i=0}^{n-1} (f(i+1)-f(i)) \, \sum_{k=i+1}^n \mu_t(k) \Big)^2\\
  &\leq \EE_t(f) \sum_{i=0}^{n-1} \frac{1}{a_t(i)}
   \Big( \sum_{k=i+1}^n \mu_t(k) \Big)^2.
  \end{split}
\end{equation}
Noting that
\begin{equation}     \label{eq:b23}
a_t(i) = 
\begin{cases}
\ds \frac12 \min \Big( \frac{1}{n+1},\mu_t(0) \Big), & i=0,\\[6pt]
\ds \frac{1}{2(n+1)}, & i=1,\ldots,n-2,\\[12pt]
\ds \frac12 \min \Big( \frac{1}{n+1},\mu_t(n) \Big), & i=n-1,\\
\end{cases}
\end{equation}
(\ref{eq:lhs}) and (\ref{eq:b1}) imply
\begin{align*}
  A_t &\leq -H_t(0) \sum_{i=0}^{n-1} \frac{1}{a_t(i)}
       \Big( \sum_{k=i+1}^n \mu_t(k) \Big)^2 \mu_t(0)\\
      &\leq -2H_t(0) \mu_t(0) \Big( \max(n+1,\mu_t(0)^{-1}) + (n-2)(n+1)\\
      &\phantom{\leq -2H_t(0) \mu_t(0) \Big( \quad}
          + \max(n+1,\mu_t(n)^{-1}) \mu_t(n) \Big)\\
      &\leq -2H_t(0) \big( n(n+1)\mu_t(0) + 2 \big)
       \leq -4H_t(0) \, (n+1).
\end{align*}
The upper bound for $B_t$ can be obtained in a similar way: since
$\ip{H_t}{\mu_t}=0$, we have
\begin{align*}
  \Big| \int H_tf\,d\mu_t \Big| &= \Big| \int H_t (f-\ip{f}{\mu_t}) \,d\mu_t
                                   \Big|\\
  &\leq \Big| \int H_t^2 (f-\ip{f}{\mu_t})^2 \,d\mu_t \Big|\\
  &= H_t(0)^2 (f(0)-\ip{f}{\mu_t})^2\,\mu_t(0)
   + H_t(n)^2 (f(n)-\ip{f}{\mu_t})^2\,\mu_t(0)\\
  &\leq 4 \big( H_t(0)^2 + H_t(n)^2 \big) (n+1)
\end{align*}
by an analogous computation as above.

To prove the upper bound for $C_t$ note that, for $f: S \to \erre$ and
$0 \leq k \leq \ell \leq n$, we have
\[
\big( f(\ell) - f(k) \big)^2 = \Big( \sum_{i=k}^{\ell-1} (f(i+1)-f(i)) \Big)^2
\leq (\ell-k) \sum_{i=k}^{\ell-1} \big( f(i+1)-f(i) \big)^2.
\]
Hence, for $t \geq 0$,
\begin{align*}
\var_{\mu_t}(f) &= \frac12 \sum_{k,\ell=0}^n \big( f(\ell)-f(k) \big)^2
                  \mu_t(k)\,\mu_t(\ell)\\
&= \sum_{k < \ell} \big( f(\ell)-f(k) \big)^2 \mu_t(k)\,\mu_t(\ell)\\
&\leq \sum_{i=0}^{n-1} \big( f(i+1)-f(i) \big)^2 \sum_{k=0}^i
      \sum_{\ell=i+1}^n (\ell-k) \mu_t(k)\,\mu_t(\ell)\\
&\leq n \sum_{i=0}^{n-1} \big( f(i+1)-f(i) \big)^2 \,
      \mu_t(\{0,1,\ldots,i\}) \, \mu_t(\{i+1,i+2,\ldots,n\})\\
&\leq n \, \max\big((n+1)/2,2\big) \, \EE_t(f).
\end{align*}
The last estimate holds by (\ref{eq:etf}), (\ref{eq:b23}),
and because
\[
\mu_t(\{0,1,\ldots,i\}) \, \mu_t(\{i+1,i+2,\ldots,n\}) \leq \frac14
\qquad \forall 0 \leq i \leq n.
\]
We have thus proved that $C_t \leq n\,\max\big((n+1)/2,2\big)$.

Conversely, choosing $f(i)=i$ for $1 \leq i \leq n-1$, $f(0)=1$, and
$f(n)=n-1$, we have
\[
\EE_t(f) = \sum_{i=1}^{n-1} a_t(i) = \frac{n-1}{2(n+1)} \leq \frac12
\]
by (\ref{eq:etf}) and (\ref{eq:b23}), and
\begin{align*}
\var_{\mu_t}(f) &\geq \sum_{k=1}^{n-1} \sum_{\ell=k+1}^{n-1}
                     (\ell-k)^2 \mu_t(k) \, \mu_t(\ell)\\
&\geq \frac{1}{8(n+1)^2} \sum_{k=1}^{n-2} \sum_{m=1}^n m^2
      \geq \frac{(n-4)^4}{96(n+1)^2},
\end{align*}
which proves the lower bound for $C_t$.
\end{proof}

\bibliographystyle{amsplain}
\bibliography{mcmc,ref}

\end{document}